\newtheorem{defi}{Definition}[section]
\newtheorem{prop}[defi]{Proposition}
\newtheorem{thm}[defi]{Theorem}
\newtheorem{lem}[defi]{Lemma}
\newtheorem{cor}[defi]{Corollary}
\newtheorem{ex}[defi]{Example}
\newtheorem{rem}[defi]{Remark}
\numberwithin{equation}{section}
\newcommand{\N}{\mathbb{N}}
\newcommand{\R}{\mathbb{R}}
\newcommand{\Lip}{\operatorname{Lip}}
\newcommand{\dist}{\operatorname{dist}}
\newcommand{\defl}{\mathrel{\mathop:}=}
\newcommand{\B}{\textbf B}		
\begin{document}

\title{Space of signatures as inverse limits of Carnot groups}
\author{Enrico Le Donne}
\author{Roger Z\"{u}st}

\address{\textsc{Enrico Le Donne}: \\
Dipartimento di Matematica, Universit\`a di Pisa, Largo B. Pontecorvo 5, 56127 Pisa, Italy \\
\& \\
University of Jyv\"askyl\"a, Department of Mathematics and Statistics, P.O. Box (MaD), FI-40014, Finland\\
enrico.ledonne@unipi.it}
\address{\textsc{Roger Z\"{u}st}: \\
Mathematical Institute, University of Bern, Alpeneggstrasse 22, 3012 Bern, Switzerland}

 \keywords{Signature of paths, inverse limit, path lifting property, submetry, metric tree, Carnot group, free nilpotent group, sub-Riemannian distance}

\renewcommand{\subjclassname}{%
 \textup{2010} Mathematics Subject Classification}
\subjclass[]{ 
 22E25, 
53C17, 
49Q15, 
28A75,  
60H05. 
}

\thanks{E.L.D. was partially supported by the Academy of Finland (grant
288501
`\emph{Geometry of subRiemannian groups}' and by grant
322898
`\emph{Sub-Riemannian Geometry via Metric-geometry and Lie-group Theory}')
and by the European Research Council
 (ERC Starting Grant 713998 GeoMeG `\emph{Geometry of Metric Groups}').
}
\begin{abstract}
We formalize the notion of limit of an inverse system of metric spaces with $1$-Lipschitz projections having unbounded fibers. The purpose is to use sub-Riemannian groups for metrizing the space of signatures of rectifiable paths in Euclidean spaces, as introduced by Chen. The constructive limit space has the universal property in the category of pointed metric spaces with 1-Lipschitz maps. In the general setting some metric properties are discussed such as the existence of geodesics and lifts. The notion of submetry will play a crucial role. The construction is applied to the sequence of free Carnot groups of fixed rank $n$ and increasing step. In this case, such limit space is in correspondence with the space of signatures of rectifiable paths in $\R^n$. Hambly-Lyons's result on the uniqueness of signature implies that this space is a geodesic metric tree that brunches at every point with infinite valence. As a particular consequence we deduce that every path in $\R^n$ can be approximated by projections of some geodesics in some Carnot group of rank $n$, giving an evidence that the complexity of sub-Riemannian geodesics increases with the step.

\end{abstract}

\maketitle


\section{Introduction}
In the study of rough paths in stochastic PDE theory to each path one assigns its signature via the method of iterated integrals. This concept was introduced by Chen \cite{C, C2} who aimed to give an algebraic structure to the space of paths. The space of $k$-truncated signatures of rectifiable paths 
in $\R^n$  has the structure of the free nilpotent Lie group of rank $n$ and step $k$. In fact, the truncated signatures have a sub-Riemannian meaning. Namely, the above Lie group has a natural structure of a Carnot group and hence to every rectifiable path in $\R^n$ one assigns a horizontal lift starting from the identity in the Carnot group. The truncated signature of the path is the final point of the lifted path.

With the idea that the signature of a rectifiable path lies in an infinite dimensional tensor algebra, one would like to metrize the space of signatures as a limit of Carnot groups. The first aim of this paper is to formalize the appropriate notion of limit of inverse systems of metric spaces. Our construction differs from previous ones as for example in \cite{CK} by the fact that the fibers of the bonding maps may not have finite diameter and it is inspired by the construction of ultralimits of metric spaces. Namely, let $((X_i, \star_i)_{i\in I}, (\pi^i_{j})_{i\geq j\in I})$ be an inverse system of pointed metric spaces and $1$-Lipschitz base-point-preserving maps $\pi^i_{j}: (X_i, \star_i) \to (X_j, \star_j)$ as connecting morphisms. In Lemma~\ref{inverse_lem} we show that the system admits an inverse limit $((X_\infty, \star_\infty), (\pi^\infty_{i})_{i\in I})$ in this category and points in $X_\infty$ can be characterized by those elements $(x_i)_{i\in I} \in \prod_{i \in I} X_i$ such that 
\[
\pi^i_{j}(x_i) = x_j \text{ for all } i\geq j \quad \text{and} \quad \sup_{i\in I} d(\star_i, x_i) < \infty \ .
\]
Our interest will be in the inverse limit of the system $(G^k(\R^n), \pi_k)_{k\in \N}$, where $G^k(\R^n)$ is the free Carnot group of rank $n$ and step $k$, pointed at the identity element. The map $\pi_k: G^k(\R^n) \to G^{k-1}(\R^n)$ is the projection modulo the $k$-layer, i.e., the $k$-th element of the lower central series of $G^k(\R^n)$. The maps $\pi_k$ have two properties that will be crucial in our following discussion: they are submetries (see Definition \ref{submetry_def}) and have lifting properties for rectifiable paths (see Definition~\ref{def:uniquelift} and \ref{def:continuouslift}). Under this type of assumptions we can draw several conclusions on our inverse limit space $X_\infty$. For example, also the projections maps $\pi^\infty_{i}$ are submetries with lifting properties and the metric space $X_\infty$ is a geodesic space, if so are the $X_i$'s (see Lemmas~\ref{uniqueliftinfinity} and Proposition~\ref{geodesic_prop}, respectively).

We deduce that the space of signatures of rectifiable paths in $\R^n$ has a canonical identification with the metric space 
\begin{equation}
\label{limit_Fnk}
\varprojlim _{k\in \N} \bigl(G^k(\R^n), \pi_k\bigr) \ .
\end{equation}
We draw some consequences from a deep result of Hambly and Lyons \cite{HL} on the uniqueness of signatures, i.e., every rectifiable path in $\R^n$ is completely determined by its signature, up to reparametrization, translation, and tree like reduction. In Theorem~\ref{tree_thm} we show that the space \eqref{limit_Fnk} is a geodesic metric tree. In fact, it is a metric space homeomorphic to a  tree that brunches at every point with infinite valence. Consequently we deduce a particular consequence about the complexity of sub-Riemannian geodesics in Carnot groups. Indeed, we come to the conclusion that every path in $\R^n$ can be arbitrarily approximated by the projection of some geodesic in $G^k(\R^n)$ for some $k$ (see Corollary~\ref{approx_cor}).
 
One more purpose to study the space \eqref{limit_Fnk} was with the aim of properly define and metrize the free Lie group with $n$ generators. Clearly, the space \eqref{limit_Fnk} still has an induced group structure for which the distance is left invariant (we initially take Carnot groups with left invariant distances). Unfortunately, this space is not a topological group since the right translations are not continuous anymore. We prove this last claim in general. Namely, a group with the topology of a tree cannot have continuous left and right translations simultaneously, unless it is $\R$ (see Theorem~\ref{treenogroup_thm}).

\section{Submetries and path lifting properties}

\label{section:inverselimit}
 
Let $(I, \geq)$ be a partially ordered set. For each $i\in I$, let $X_i$ be a metric space and let $\star_i \in X_i$ a choice of a base point in $X_i$. We shall say that $(X_i, \star_i)$ is a pointed metric space. For all $i, j\in I$ with $i\geq j$ let $\pi^i_{j}: (X_i, \star_i) \to (X_j , \star_j)$ be a map that is $1$-Lipschitz and preserves the base point, i.e.,  $\pi_{i+1}(\star_{i+1}) = \star_i$. The maps  $\pi^i_{j}$ are called {\bf bonding maps} if $\pi^i_{i}$ is the identity map in $X_i$ and $\pi^j_{k}\circ \pi^i_{j}=\pi^i_{k}$, for all $i\geq j\geq k$. In this case, we have that $((X_i, \star_i)_{i\in I}, (\pi^i_{j})_{i\geq j\in I})$ is an {\bf inverse system} of pointed metric spaces and $1$-Lipschitz base-point-preserving maps $\pi^i_{j}: (X_i, \star_i) \to (X_j , \star_j)$.

The most important example for us is when $I=\N$. Namely, assume we have a sequence of pointed metric spaces $(X_i, d_i, \star_i)_{i \in \N}$ together with $1$-Lipschitz maps $\pi_{i+1} : X_{i+1} \to X_{i}$ that preserve the base point. This leads to  an inverse system. For $i \geq j$, we define 
\begin{equation}
\label{def:comp}
\pi_{j}^i : X_i \to X_j \quad \text{as} \quad \pi_{j}^i \defl \pi_{j+1} \circ \cdots \circ \pi_{i} \ .
\end{equation}
As a composition of $1$-Lipschitz maps, each $\pi_{j}^i$ is itself $1$-Lipschitz.

As discussed in the introduction, we consider the space $X_\infty$ as the set of elements $(x_i)_{i\in I} \in \prod_{i \in I} X_i$ with $x_i \in X_i$ with the property that $\pi_j^{i}(x_{i}) = x_j$ and
\[
\sup_{i \in I} d_i(\star_i,x_i) < \infty \ .
\]
It is thus clear that $X_\infty$ is nonempty since $\star_\infty \defl (\star_i)_{i\in I} \in X_\infty$. The distance between $x = (x_i)_{i\in I}$ and $y = (y_i)_{i\in I}$ in $X_\infty$ is defined by  
\[
d_\infty(x,y) \defl \sup_{i \in I}d_i(x_i,y_i) \ .
\]
There are natural projections $\pi_i^\infty: X_\infty \to X_i$ is given by $\pi_i^\infty((x_j)_{j\in I}) \defl x_i$ for $i \in I$.

\begin{defi}
	\label{submetry_def}
A \textbf{submetry} is a $1$-Lipschitz map between (nonempty) metric spaces $\pi : X \to Y$ such for any $y,y' \in Y$ and $x \in X$ with $\pi(x) = y$ there is a $x' \in X$ with $\pi(x') = y'$ and $d_X(x,x') = d_Y(y,y')$.
\end{defi}

If in the notation \eqref{def:comp} each of the $\pi_i$ is a submetry, then also every $\pi_{i}^j$ is a submetry. This last claim follows from the simple observation that the composition of submetries is also a submetry. 

\begin{lem}
\label{firstlem} The space $(X_\infty,d_\infty)$ constructed above is a metric space and further:
\begin{enumerate}
	\item The projection $ \pi_i^\infty : (X_\infty,d_\infty) \to (X_i,d_i)$ is $1$-Lipschitz for every $i \in I$.
	\item If $X_i$ is complete for every $i \in I$, then so is $X_\infty$.
	\item If $I=\N$, then $d_\infty(x , y) = \lim_{i \to \infty} d_i(   \pi_i^\infty(x),   \pi_i^\infty(y))$.
	\item If $I=\N$ and $\pi_i$ is a submetry for every $i$, then $ \pi_i^\infty$ is a submetry for every $i \in \N$. 
\end{enumerate}
\end{lem}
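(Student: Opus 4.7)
The plan is to handle the four parts in the order they are stated, as each builds naturally on the previous.

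For part (1), I would first verify that $d_\infty$ is finite on $X_\infty\times X_\infty$ and defines a metric. Finiteness follows from the triangle inequality in each $X_i$: $d_i(x_i,y_i)\leq d_i(\star_i,x_i)+d_i(\star_i,y_i)$, and the right-hand side is bounded independently of $i$ by definition of $X_\infty$. Symmetry, non-negativity, and the triangle inequality all descend from the corresponding properties in each $X_i$ by taking suprema. Non-degeneracy is immediate since $d_\infty(x,y)=0$ forces $x_i=y_i$ for every $i$. The $1$-Lipschitz property of $\pi_i^\infty$ is then a one-line observation: $d_i(\pi_i^\infty(x),\pi_i^\infty(y))=d_i(x_i,y_i)\leq\sup_j d_j(x_j,y_j)=d_\infty(x,y)$.

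For part (2), given a Cauchy sequence $(x^n)_n$ in $X_\infty$, I would use the $1$-Lipschitz projections to get that $(x_i^n)_n$ is Cauchy in $X_i$ for each $i$, hence converges to some $x_i$ by completeness. Compatibility $\pi_j^i(x_i)=x_j$ is preserved in the limit by continuity of $\pi_j^i$, and the supremum $\sup_i d_i(\star_i,x_i)$ is finite because Cauchy sequences are bounded in $X_\infty$, so $\sup_i d_i(\star_i,x_i^n)\leq C$ uniformly in $n$, and letting $n\to\infty$ gives the same bound for $x$. Finally, to show $x^n\to x$ in $X_\infty$, fix $\varepsilon>0$, choose $N$ with $d_\infty(x^m,x^n)\leq\varepsilon$ for $m,n\geq N$, so $d_i(x_i^m,x_i^n)\leq\varepsilon$ for every $i$; letting $m\to\infty$ yields $d_i(x_i,x_i^n)\leq\varepsilon$ for every $i$, hence $d_\infty(x,x^n)\leq\varepsilon$.

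For part (3), the key observation is that when $I=\N$, the sequence $i\mapsto d_i(x_i,y_i)$ is monotonically non-decreasing: since $\pi_i=\pi_i^{i+1}$ is $1$-Lipschitz and $x_i=\pi_i(x_{i+1})$, $y_i=\pi_i(y_{i+1})$, we have $d_i(x_i,y_i)\leq d_{i+1}(x_{i+1},y_{i+1})$. Therefore the supremum defining $d_\infty(x,y)$ equals $\lim_{i\to\infty} d_i(x_i,y_i)$.

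Part (4) is the main obstacle and requires a careful construction. Given $x\in X_\infty$ and a target $y_i\in X_i$, set $r\defl d_i(x_i,y_i)$; I would build $y=(y_j)_{j\in\N}\in X_\infty$ with $\pi_i^\infty(y)=y_i$ and $d_\infty(x,y)=r$. For indices $j\leq i$, define $y_j\defl \pi_j^i(y_i)$; then $d_j(x_j,y_j)\leq d_i(x_i,y_i)=r$ by the $1$-Lipschitz property. For indices $j>i$, construct $y_j$ inductively using the submetry property of $\pi_j:X_j\to X_{j-1}$: given $y_{j-1}$ already satisfying $\pi_{j-1}(x_j)\cdots$ — more precisely, at step $j$ we have $x_j\in X_j$ with $\pi_j(x_j)=x_{j-1}$ and we seek $y_j$ with $\pi_j(y_j)=y_{j-1}$ and $d_j(x_j,y_j)=d_{j-1}(x_{j-1},y_{j-1})$; the submetry hypothesis on $\pi_j$ produces exactly such a $y_j$. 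By induction $d_j(x_j,y_j)=r$ for all $j\geq i$. Combining both ranges, $\sup_j d_j(x_j,y_j)=r$, so $y\in X_\infty$ (the boundedness $\sup_j d_j(\star_j,y_j)\leq\sup_j d_j(\star_j,x_j)+r<\infty$ follows from the triangle inequality), and $d_\infty(x,y)=r=d_i(x_i^\infty,y_i)$, as required. The delicate point is that the submetry equation $d_j(x_j,y_j)=d_{j-1}(x_{j-1},y_{j-1})$ does not automatically give a compatible tower, but here the recursive choice along $\N$ is well-posed and preserves compatibility by construction.
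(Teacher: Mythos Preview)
Your proof is correct and follows essentially the same approach as the paper's own argument: the verification that $d_\infty$ is a metric, the $1$-Lipschitz projections, the completeness via coordinatewise limits, the monotonicity of $i\mapsto d_i(x_i,y_i)$ for (3), and the recursive lift using the submetry property for (4) are all carried out in the same way. Your closing remark about compatibility in (4) is slightly overcautious---the submetry definition already guarantees $\pi_j(y_j)=y_{j-1}$, so compatibility is automatic---but this does not affect correctness.
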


\begin{proof}
Let $x = (x_i)_{i \in I}$, $y = (y_i)_{i \in I}$ and $z = (z_i)_{i \in I}$ be arbitrary elements of $X_\infty$. First note that by the triangle inequality in each $X_i$, we have
\[
d_\infty(x,y) \leq \sup_{i \in I} d_i(\star_i,x_i) + d_i(\star_i,y_i) \leq \sup_{i \in I}d_i(\star_i,x_i) + \sup_{i \in I} d_i(\star_i,y_i) < \infty \ .
\]
Hence $d_\infty$ is finite. Similarly one can show the triangle inequality. Indeed,
\begin{align*}
d_\infty(x, z) & =  \sup_{i \in I} d_i(x_i,z_i) \leq \sup_{i \in I} d_i(x_i,y_i) + \sup_{i \in I} d_i(x_i,z_i) \\
 & \leq d_\infty(x, y) + d_\infty(y, z) \ .
\end{align*}
If $d_\infty(x, y) = 0$, then of course $x_i = x_i'$ for all $i \in I$ and hence $x = y$. Thus $d_\infty$ is a metric on $X_\infty$.

(1) is clear since $d_i(x_i,y_i) \leq d_\infty(x,y)$ for all $i \in \N$.

For (2) consider a Cauchy sequence $(x^n)_{n \in \N}$ in $X_\infty$. We can write $x^n = (x^n_i)_{i \in I}$ for every $n \in \N$. Because $\pi^\infty_i : X_\infty \to X_i$ is $1$-Lipschitz by (1), the sequence $(x^n_i)_{n \in \N}$ is a Cauchy sequence in $X_i$ for every $i \in I$. By assumption these sequences converge, say to $x_i \defl \lim_{n \to \infty} x^n_i \in X_i$. We want to show that $(x^n)_{n \in \N}$ converges to $x \defl (x_i)_{i \in I}$ in $X_\infty$. First note that $\pi^i_j(x_i) = x_j$ for $i \geq j$ because $\pi^i_j$ is continuous and $\pi^i_j(x_i^n) = x_j^n$ holds for all $n \in \N$. Because $(x^n)_{n \in \N}$ is a Cauchy sequence there is some $R > 0$ such that $x^n \in \B(\star_\infty,R)$ for all $n \in \N$, which is equivalent to $d_i(\star_i,x_i^n) \leq R$ for all $i$ and $n$. Taking limits, this implies that also $d_i(\star_i,x_i) \leq R$ for all $i$ and hence $x$ is an element of $X_\infty$. Now if $\epsilon > 0$ is given, there exists $N_\epsilon \in \N$ such that $d_\infty(x^n,x^m) \leq \epsilon$ for all $n,m \geq N_\epsilon$. Again because each $\pi^\infty_i$ is $1$-Lipschitz it holds that $d_i(x_i^n,x_i) \leq \epsilon$ for all $i \in I$ and all $n \geq N_\epsilon$. Hence 
\[
d_\infty(x^n,x) = \sup_{i \in I} d_i(x^n_i,x_i) \leq \epsilon
\]
for all $n \geq N_\epsilon$. Thus $x^n$ converges to $x$ in $X_\infty$ and $(2)$ follows.

To see (3) note that $\pi_{i}(x_{i+1}) = x_{i}$ and $\pi_{i}(x_{i+1}') = x_{i}'$. Because each $\pi_{i}$ is $1$-Lipschitz, the sequence $(d_i(x_i,y_i))_{i \in \N}$ is increasing. Hence (3) holds.

In order to show (4) fix $k \in I$ and consider $x_k,y_k \in X_k$ and $x = (x_i)_{i \in \N} \in X_\infty$, i.e.\ such that $\pi^\infty_k(x) = x_k$. For $i < k$ set $y_i \defl \pi^k_{i}(y_k)$. For $i > k$ we recursively define $y_{i+1} \in \pi_{i+1}^{-1}(y_i)$ to be some point with $d_{i+1}(x_{i+1},y_{i+1}) = d_{i}(x_i,y_i)$. This is possible because each $\pi_{i}$ is a submetry. By construction and (3),
\begin{equation}
\label{sup_eq}
\sup_{i\in\N} d_{i}(x_{i},y_{i}) = \lim_{i \to \infty} d_i(x_i,y_i) = d_k(x_k,y_k) < \infty \ .
\end{equation}
Set $y \defl (y_i)_{i \in \N} \in \prod_{i \in \N} X_i$. Since 
\[
\sup_{i\in\N} d_{i}(\star_i, y_{i}) \leq \sup_{i\in\N} d_{i}(\star_i, y_{i}) + \sup_{i\in\N} d_{i}(x_{i},y_i) < \infty
\]
it follows that $y \in X_\infty$. From \eqref{sup_eq} we obtain 
\[
d_\infty(x,y) = \lim_{i \to \infty} d_i(x_i, y_i) = d_k(x_k,y_k) \ .
\]
Since each $\pi^\infty_k$ is $1$-Lipschitz by (1), this shows that these maps are submetries.
\end{proof}

We readily show that the space $(X_\infty,\star_\infty)$, as defined, is the inverse limit in the category of pointed metric spaces with base-point-preserving $1$-Lipschitz maps. Notice that we do not show the analogue considering submetries, see Example~\ref{ex:counterexample} below.

\begin{lem}
\label{inverse_lem}
Assume that $(X_i,\star_i)_{i\in I}$ together with $\pi^i_{j}:X_i \to X_j$ for $i \geq j$ is an inverse system in the category of pointed metric spaces with base-point-preserving $1$-Lipschitz maps as morphisms. Then $(X_\infty,\star_\infty)$ is its inverse limit.
\end{lem}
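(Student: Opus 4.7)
The plan is to verify the two ingredients of an inverse limit: first that $(X_\infty,\star_\infty,(\pi_i^\infty)_{i\in I})$ is a cone over the diagram, and second the universal property for cones. The first is essentially definitional: by construction, elements of $X_\infty$ are compatible families, so $\pi_j^i \circ \pi_i^\infty = \pi_j^\infty$ for all $i \geq j$; each $\pi_i^\infty$ is $1$-Lipschitz by Lemma~\ref{firstlem}(1), and base-point preserving since $\pi_i^\infty(\star_\infty) = \star_i$.

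For the universal property, suppose $(Y,\star_Y)$ is a pointed metric space together with $1$-Lipschitz base-point-preserving maps $f_i : (Y,\star_Y) \to (X_i,\star_i)$ satisfying $\pi_j^i \circ f_i = f_j$ for all $i \geq j$. The only possible candidate for the factoring map is
\[
f : Y \to X_\infty, \qquad f(y) \defl (f_i(y))_{i \in I},
\]
because any $g : Y \to X_\infty$ with $\pi_i^\infty \circ g = f_i$ must have $i$-th coordinate $f_i(y)$. This gives uniqueness for free; it remains to check that $f$ is well-defined and has the required properties.

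To see $f(y) \in X_\infty$, the compatibility $\pi_j^i(f_i(y)) = f_j(y)$ is the cone assumption, and the boundedness condition holds because each $f_i$ is $1$-Lipschitz and preserves base points, giving
\[
\sup_{i \in I} d_i(\star_i, f_i(y)) = \sup_{i \in I} d_i(f_i(\star_Y), f_i(y)) \leq d_Y(\star_Y, y) < \infty.
\]
The same estimate, applied to two points, shows that $f$ is $1$-Lipschitz:
\[
d_\infty(f(y),f(y')) = \sup_{i \in I} d_i(f_i(y),f_i(y')) \leq d_Y(y,y').
\]
Clearly $f(\star_Y) = (f_i(\star_Y))_i = (\star_i)_i = \star_\infty$, and $\pi_i^\infty \circ f = f_i$ by the definition of $f$.

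There is no serious obstacle here; the argument is a direct verification once the candidate map is written down. The only small point worth emphasizing is that the \emph{boundedness} condition entering the definition of $X_\infty$ is precisely what needs the $1$-Lipschitz and base-point-preserving hypotheses on the $f_i$, so the proof really uses both pieces of structure in the category simultaneously.
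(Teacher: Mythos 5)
Your proposal is correct and follows essentially the same route as the paper: write down the forced candidate $f(y)=(f_i(y))_{i\in I}$, use the $1$-Lipschitz and base-point-preserving hypotheses to get the boundedness condition defining $X_\infty$ and the $1$-Lipschitz bound for $f$, and note uniqueness because the coordinates are prescribed. The only cosmetic difference is that you spell out the cone property of $(X_\infty,(\pi_i^\infty))$ explicitly, which the paper leaves implicit.
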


\begin{proof}
We need to check the universal property for $(X_\infty,\star_\infty)$. In this direction let $(Y,\ast)$ be some pointed metric space together with base-point-preserving $1$-Lipschitz maps $u_i : Y \to X_i$ that satisfy $\pi^i_{j}\circ u_i = u_j$ for all $i \geq j$. We need to find a base-point-preserving $1$-Lipschitz $u : Y \to X_\infty$ such that $\pi^\infty_i \circ u = u_i$ for all $i \in I$. For $y \in Y$ set $u(y) \defl (u_i(y))_{i \in I} \in \prod_{i \in I} X_i$. Since each $u_i$ is base-point-preserving, $u(\ast) = (\star_i)_{i \in I} = \star_\infty$. Since each $u_i$ is $1$-Lipschitz,
\begin{align*}
\sup_{i \in I} d_i(\star_i,u_i(y)) = \sup_{i \in I} d_i(u_i(\ast),u_i(y)) \leq d_Y(\ast, y) < \infty \ .
\end{align*}
This shows that $u$ is a base-point-preserving map into $X_\infty$. Moreover, the function $u$ is $1$-Lipschitz for the same reason: given $y,y' \in Y$, then
\[
d_\infty(u(y),u(y')) = \sup_{i \in I} d_i(u_i(y),u_i(y')) \leq d_Y(y,y') \ .
\]
Hence $u : Y \to  X_\infty$ is a base-point-preserving $1$-Lipschitz map. It is clear that $u : Y \to X$ is the only map that satisfies $\pi^\infty_i \circ u = u_i$ for all $i \in I$.
\end{proof}

It is interesting to note that $(X_\infty,\star_\infty)$ is not the inverse limit in the category of pointed metric spaces with base-point-preserving submetries. Indeed, in this category inverse limits do not exist in general as the following example demonstrates. This does not come as a surprise because submetries being surjective is a rather strong restriction.

\begin{ex}
	\label{ex:counterexample}
For each $n \in \N$ let $X_n$ be the metric space with $n$ elements denoted by $\{a_1,\dots,a_n\}$ and pairwise distances $1$ for different points. The projection $\pi_{n+1} : X_{n+1} \to X_n$ is given by $\pi_{n+1}(a_i) = a_i$ for $i \leq n$ and $\pi_{n+1}(a_{n+1}) = a_n$. These maps are clearly submetries. Let $Y = \{a_i,b_i\}_{i \in \N} \cup \{b\}$ be a countable metric space with distances $d(a_i,a_j) = d(b_i,b_j) = d(b_i,b) = 1$ for $i \neq j$ and $d(a_i,b_j) = d(a_i,b) = 2$ for all $i,j\in \N$. Let $u_n : Y \to X_n$ be the map given by $u_n(a_i) = u_n(b_i) = a_i$ for $i \leq n$ and $u_n(y) = a_n$ for all other points $y \in Y$. These maps $u_n$ are compatible with the projections $\pi_n$ and submetries. For the latter note that for different fibers in $Y$ have distance $\dist(u_n^{-1}(a_i),u_n^{-1}(a_j)) = 1 = d_n(a_i,a_j)$ and this distance is realized for any point in one of the fibers.
	
Now assume $u : Y \to X$ is a map into a metric space such that $u_n = \pi^\infty_n \circ u$ for all $n \in \N$. We claim that $u$ can not be a submetry. First note that $u^{-1}(u(b)) = \{b\}$ because for any other point $y \neq b$ in $Y$ there exists some $n \geq 1$ such that $u_n(y) \neq u_n(b)$. For the same reason, the fiber $u^{-1}(u(a_i))$ is either $\{a_i\}$ or $\{a_i,b_i\}$. Assume first that there exists some $i$ such that $u^{-1}(u(a_i)) = \{a_i,b_i\}$. Then 
\[
\dist\bigl(u^{-1}(u(a_i)),u^{-1}(u(b))\bigr) = \dist(\{a_i,b_i\},\{b\}) = 1
\]
but this distance is not realized by the point $\{a_i\}$ in the first fiber. In the other case we do have $u^{-1}(u(a_i)) = \{a_i\}$ for all $i$ and hence also $u^{-1}(u(b_i)) = \{b_i\}$ for all $i$. Thus $u : Y \to X$ as an injective submetry is an isometry. If we do the same for $Y'$ obtained from $Y$ by changing all the distances with value $2$ to $3/2$ we obtain similarly that $Y'$ is isometric to $X$, but $Y$ and $Y'$ are obviously not isometric. This shows that there can not be an inverse limit for the sequence of spaces given and assuming the morphisms are submetries.
\end{ex}

In what follows we introduce additional properties for maps.

\begin{defi}
	\label{def:uniquelift}
A submetry $\pi : X \to Y$ has the \textbf{unique path lifting property} if for all rectifiable paths $\gamma : [a,b] \to Y$ and each point $x \in \pi^{-1}(\gamma(a))$ there is path $\bar\gamma : [a,b] \to X$ such that
\begin{enumerate}
	\item $L(\bar\gamma) < \infty$,
	\item $\bar \gamma(a) = x$,
	\item $\pi(\bar\gamma(t)) = \gamma(t)$ for all $t \in [a,b]$,
	\item if a continuous path $\eta : [a,b] \to X$ satisfies (1),(2) and (3), then $\eta = \bar \gamma$,
	\item $L(\bar\gamma) \leq L(\gamma)$ (because $\pi$ is $1$-Lipschitz we have equality).
\end{enumerate}
Such a path $\bar \gamma$ is called a \textbf{lift} of $\gamma$.
\end{defi}

\begin{lem}
\label{uniqueliftproperties}
Assume that $\pi : X \to Y$ has the unique path lifting property and $\bar \gamma$ is a lift of a rectifiable path $\gamma : [a,b] \to Y$. The following hold:
\begin{enumerate}[$\qquad (a)$]
	\item If $a \leq s < t \leq b$, then $L(\bar\gamma|_{[s,t]}) = L(\gamma|_{[s,t]})$.
	\item If $\gamma$ is $L$-Lipschitz, then also $\bar\gamma$ is  $L$-Lipschitz.
	\item If $\eta : [a,b] \to X$ is rectifiable, then $\eta$ is the unique lift of $\pi \circ \eta$ starting at $\eta(S)$.
\end{enumerate}
\end{lem}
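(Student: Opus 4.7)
The three items of the lemma all follow fairly quickly from the definition of the unique lift together with the additivity of length under concatenation and the fact that $\pi$ being $1$-Lipschitz gives $L(\pi\circ\sigma)\leq L(\sigma)$ for every rectifiable path $\sigma$ in $X$. I would organize the proof in the order $(a)\Rightarrow(b)$, then $(c)$ separately.

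For $(a)$, the plan is to argue that the restriction $\bar\gamma|_{[s,t]}$ is itself the unique lift of $\gamma|_{[s,t]}$ starting at $\bar\gamma(s)$. Since $L(\bar\gamma|_{[s,t]})\leq L(\bar\gamma)<\infty$ and $\pi\circ\bar\gamma|_{[s,t]}=\gamma|_{[s,t]}$, the restriction satisfies conditions (1)--(3) of Definition~\ref{def:uniquelift} for the rectifiable path $\gamma|_{[s,t]}$ with initial point $\bar\gamma(s)\in\pi^{-1}(\gamma(s))$. Let $\sigma$ denote the lift of $\gamma|_{[s,t]}$ starting at $\bar\gamma(s)$ provided by the unique path lifting property; by the uniqueness condition (4) applied to $\sigma$, we must have $\bar\gamma|_{[s,t]}=\sigma$, and hence item (5) gives $L(\bar\gamma|_{[s,t]})=L(\sigma)=L(\gamma|_{[s,t]})$. (Alternatively one can decompose $L(\bar\gamma)=L(\gamma)$ additively over $[a,s]\cup[s,t]\cup[t,b]$ and use the pointwise inequality $L(\gamma|_J)\leq L(\bar\gamma|_J)$ on each piece to force equality.)

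Item $(b)$ is then immediate from $(a)$: for $a\leq s<t\leq b$,
\[
d_X(\bar\gamma(s),\bar\gamma(t))\leq L(\bar\gamma|_{[s,t]})=L(\gamma|_{[s,t]})\leq L(t-s),
\]
where the last inequality uses the $L$-Lipschitz hypothesis on $\gamma$.

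For $(c)$ (interpreting the typo so that the base point is $\eta(a)$), note that $\pi\circ\eta$ is a rectifiable path in $Y$ because $\pi$ is $1$-Lipschitz, so $L(\pi\circ\eta)\leq L(\eta)<\infty$. Applying the unique path lifting property to $\gamma\defl\pi\circ\eta$ at the initial point $\eta(a)\in\pi^{-1}(\gamma(a))$ yields a lift $\bar\gamma$. The given path $\eta$ itself satisfies conditions (1)--(3) relative to this $\gamma$ and starting point, so uniqueness condition (4) forces $\eta=\bar\gamma$.

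There is no real obstacle here; the only point that requires a moment of care is verifying in $(a)$ that one is legitimately allowed to invoke the uniqueness clause for the restricted path, which relies on the fact that the lift of $\gamma|_{[s,t]}$ obtained from the hypothesis agrees with $\bar\gamma|_{[s,t]}$ by (4) applied on the subinterval.
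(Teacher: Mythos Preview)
Your proof is correct and follows essentially the same route as the paper's: for (a) you identify $\bar\gamma|_{[s,t]}$ as the unique lift of $\gamma|_{[s,t]}$ and invoke (5), for (b) you use (a) together with the trivial length bound, and for (c) you apply the uniqueness clause to $\eta$ viewed as a lift of $\pi\circ\eta$. The paper also mentions the alternative additive argument for (a) that you note in parentheses.
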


\begin{proof}
The path $\bar\gamma|_{[s,t]}$ satisfies (1), (2) and (3) in the definition above for the interval $[s,t] \subset [a,b]$ and the starting point $\bar \gamma(s)$. Hence $\bar\gamma|_{[s,t]}$ is the unique lift with these properties by (4) and  hence $L(\bar\gamma|_{[s,t]}) = L(\gamma|_{[s,t]})$ by (5). This shows (a). (a) also follows directly from $\Lip(\pi) \leq 1$ and $L(\bar\gamma) = L(\gamma)$.

If moreover $\gamma$ is $L$-Lipschitz, then
\[
d_X(\bar\gamma(s),\bar\gamma(t)) \leq L(\bar\gamma|_{[s,t]}) \leq L(\gamma|_{[s,t]}) \leq L|t-s| \,.
\]
Hence also $\bar\gamma$ is  $L$-Lipschitz.

Since $\pi$ is Lipschitz, also the projection $\pi \circ \bar \gamma$ is  rectifiable. It is clear that $\bar \gamma$ is a lift of $\pi \circ \bar \gamma$ and the uniqueness follows from the assumption on $\pi : X \to Y$.
\end{proof}

Next we show that the unique path lifting property is inherited by $X_\infty$.

\begin{lem}
\label{uniqueliftinfinity}
Assume that $\pi_{i+1} : (X_{i+1},\star_{i+1}) \to (X_{i},\star_{i})$ is base-point-preserving and has the unique path lifting property for every $i \in \N$. Then $\pi^\infty_{i} : X_{\infty} \to X_i$ also has the unique path lifting property for every $i \in \N$.
\end{lem}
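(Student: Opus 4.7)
The plan is to construct the lift level-by-level. Given a rectifiable $\gamma:[a,b]\to X_i$ and $x=(x_j)_{j\in\N}\in X_\infty$ with $\pi^\infty_i(x)=\gamma(a)$, I set $\gamma_i\defl\gamma$, and for $j<i$ take $\gamma_j\defl\pi^i_j\circ\gamma$. For $j>i$, I proceed recursively: assuming $\gamma_{j-1}$ has already been built as a rectifiable path through $x_{j-1}$, use the unique path lifting property of $\pi_j:X_j\to X_{j-1}$ to obtain the unique rectifiable lift $\gamma_j:[a,b]\to X_j$ of $\gamma_{j-1}$ starting at $x_j$ (note $\pi_j(x_j)=x_{j-1}=\gamma_{j-1}(a)$, so the starting point is compatible). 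Then I define $\bar\gamma(t)\defl(\gamma_j(t))_{j\in\N}$.

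Next I need to verify $\bar\gamma(t)\in X_\infty$ for every $t$. The compatibility $\pi^j_k(\gamma_j(t))=\gamma_k(t)$ for $j\geq k$ is immediate from the recursive construction (and from the definition for indices below $i$). For the boundedness condition, property~(5) of the unique lifting definition and Lemma~\ref{uniqueliftproperties}(a) yield $L(\gamma_j|_{[a,t]})=L(\gamma|_{[a,t]})\leq L(\gamma)$ for every $j\geq i$, while for $j<i$ the map $\pi^i_j$ being $1$-Lipschitz gives $L(\gamma_j|_{[a,t]})\leq L(\gamma)$ as well. Hence
\[
d_j(\star_j,\gamma_j(t))\leq d_j(\star_j,x_j)+d_j(x_j,\gamma_j(t))\leq d_j(\star_j,x_j)+L(\gamma),
\]
whose supremum over $j$ is finite because $x\in X_\infty$. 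The same length bound applied on arbitrary subintervals gives, for all $s<t$,
\[
d_\infty(\bar\gamma(s),\bar\gamma(t))=\sup_{j\in\N}d_j(\gamma_j(s),\gamma_j(t))\leq\sup_{j\in\N}L(\gamma_j|_{[s,t]})\leq L(\gamma|_{[s,t]}),
\]
so $\bar\gamma$ is continuous, rectifiable with $L(\bar\gamma)\leq L(\gamma)$, and by the $1$-Lipschitz property of $\pi^\infty_i$ (Lemma~\ref{firstlem}(1)) the reverse inequality also holds, giving property~(5). Conditions (1)--(3) of Definition~\ref{def:uniquelift} are built into the construction.

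For uniqueness (property~(4)), suppose $\eta:[a,b]\to X_\infty$ is any rectifiable path with $\eta(a)=x$ and $\pi^\infty_i\circ\eta=\gamma$. Set $\eta_j\defl\pi^\infty_j\circ\eta$; each $\eta_j$ is rectifiable since $\pi^\infty_j$ is $1$-Lipschitz. For $j<i$ we have $\eta_j=\pi^i_j\circ\eta_i=\pi^i_j\circ\gamma=\gamma_j$ automatically. For $j\geq i$ I argue inductively: $\eta_i=\gamma=\gamma_i$, and if $\eta_{j-1}=\gamma_{j-1}$, then $\eta_j$ is a rectifiable path in $X_j$ starting at $x_j$ with $\pi_j\circ\eta_j=\eta_{j-1}=\gamma_{j-1}$, so by the uniqueness clause in the definition of the lifting property of $\pi_j$ (equivalently by Lemma~\ref{uniqueliftproperties}(c)), $\eta_j=\gamma_j$. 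Hence $\eta=\bar\gamma$.

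The main subtlety is ensuring the coordinate-wise lift lands in $X_\infty$ rather than in $\prod_j X_j$; this is precisely where the length-preservation property~(5) of the unique lifting definition does the work, converting a uniform length bound into a uniform bound on the distance to the basepoint. Everything else is essentially bookkeeping with $1$-Lipschitz projections and the inductive uniqueness from the finite-level hypothesis.
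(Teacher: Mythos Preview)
Your proof is correct and follows essentially the same construction as the paper's: build the lift coordinate-wise by projecting below level $i$ and recursively lifting above, use length preservation to show the result lands in $X_\infty$ and is continuous with the right length, and establish uniqueness by inducting on the level. One small omission: Definition~\ref{def:uniquelift} is stated for submetries, so to conclude that $\pi^\infty_i$ has the unique path lifting property you should also note (as the paper does at the outset) that $\pi^\infty_i$ is a submetry by Lemma~\ref{firstlem}(4); this is immediate since each $\pi_{i+1}$, having the unique path lifting property, is in particular a submetry.
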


\begin{proof}
From Lemma~\ref{firstlem} we already know that each $\pi^\infty_i$ is a submetry. Fix some $j \in \N$ and let $\gamma_j : [a,b] \to X_j$ be a rectifiable path and let $x = (x_1,x_2,\dots) \in X_\infty$ be a point with $x_j = \gamma_j(a)$. Such a point exists because $\pi^\infty_j$ as a submetry is surjective. For $i < j$, let $\gamma_i \defl \pi^j_{i}\circ\gamma_j$. For $i > j$ let $\gamma_i$ be the successive lift of $\gamma_j$ with $\gamma_i(a) = x_i$. First we show that $\gamma_\infty(t) \defl (\gamma_1(t),\gamma_2(t),\dots)$ is in $X_\infty$ for each $t \in [a,b]$. Because lifts are length-preserving,
\[
d_i(x_i,\gamma_i(t)) \leq L(\gamma_i) \leq L(\gamma_j) \ ,
\]
for all $i > j$ and the same holds true for $i < j$ because each $\pi_i$ is $1$-Lipschitz. Hence
\[
\sup_{i \geq 1} d_i(\star_i,\gamma_i(t)) \leq \sup_{i \geq 1} d_i(\star_i,x_i) + \sup_{i \geq 1} d_i(x_i,\gamma_i(t)) < \infty \ ,
\]
which implies that $\gamma_\infty(t) \in X_\infty$ for all $t \in [a,b]$. From Lemma~\ref{uniqueliftproperties} it follows that $L(\gamma_i|_{[s,t]}) = L(\gamma_j|_{[s,t]})$ for all $a \leq s < t\leq b$ and $i \in \N$. Further
\begin{align*}
d_\infty(\gamma_\infty(s),\gamma_\infty(t)) & = \sup_{i \geq 1} d_i(\gamma_i(s),\gamma_i(t)) \leq \sup_{i \geq 1} L(\gamma_i|_{[s,t]}) \\
 & \leq L(\gamma_j|_{[s,t]}) \,.
\end{align*}
This shows that $\gamma_\infty$ is continuous since $L(\gamma_j|_{[s,t]}) \to 0$ for $t-s \to 0$. By taking a finite partition of $[a,b]$ and summing up the distances it is immediate that $L(\gamma_\infty) \leq L(\gamma_j)$. So $\gamma_\infty$ is a lift of $\gamma_j$ and it remains to show the uniqueness of $\gamma_\infty$. 

Assume that $\eta : [a,b] \to X_\infty$ is a rectifiable path with $\bar\pi_j(\eta(t)) = \gamma_j(t)$ for all $t$ as well as $\eta(a) = x$. Then for each $i > j$, the path $\pi^\infty_i \circ \eta$ is a lift of $\gamma_j$ with $\pi^\infty_i \circ \eta(a) = x_i$. By the unique path lifting property it follows that $\pi^\infty_i \circ \eta(t) = \pi^\infty_i \circ \gamma_\infty(t)$ for all $t$ and $i$. Hence $\eta = \gamma_\infty$ and we are done.
\end{proof}

It follows from Lemma~\ref{uniqueliftproperties}(c) that $\gamma_\infty : [a,b] \to X_\infty$ is uniquely defined by $\gamma_\infty(a)$ and $\pi^\infty_i\circ \gamma_\infty$ for any given $i$. We call such a rectifiable path $\gamma_\infty$ the lift of its projections $\pi^\infty_i \circ \gamma_\infty$ at the starting point $\gamma_\infty(a)$.

\begin{defi}
	\label{def:continuouslift}
A map $\pi : X \to Y$ has the \textbf{continuous path lifting property} if it has the unique path lifting property and for each sequence of paths $\gamma_n : [0,1] \to Y$, $n \in \N$, with
\begin{enumerate}
	\item $\gamma_n(0) = y$ for all $n$,
	\item $\sup_{n \geq 1} \Lip(\gamma_n) < \infty$,
	\item $\gamma(t) = \lim_{n\to\infty} \gamma_n(t)$ for all $t \in [0,1]$,
\end{enumerate}
it holds that
\[
\bar\gamma(t) = \lim_{n\to\infty} \bar \gamma_n(t) \quad \mbox{for all}\quad t \in [0,1] \ ,
\]
where $\bar \gamma_n$ and $\bar \gamma$ are lifts of $\gamma_n$ and $\gamma$, respectively, with the same starting point $\bar \gamma_n(0) = \bar \gamma(0) \in \pi^{-1}(y)$.
\end{defi}

Note that in the definition above, assuming uniform bounds on Lipschitz constants, pointwise convergence and uniform convergence are equivalent.

Recall that a metric space $(X,d)$ is \textbf{proper} if all the closed balls $\B(x,r)$ are compact. The metric space $(X,d)$ is a \textbf{geodesic space} if for all points $x,y \in X$ there is a path $\gamma : [a,b] \to X$ with $\gamma(a)=x$, $\gamma(b)=y$ and $L(\gamma) = d(x,y)$. Any such path is called a geodesic. A curve $\gamma$ is \textbf{parametrized proportionally to arc length} if $L(\gamma|_{[s,t]}) = L(\gamma)|b-a|^{-1}|t-s|$ for all $a \leq s \leq t \leq b$.

\begin{prop}
\label{geodesic_prop}
Assume that for every $i \in \N$, the pointed metric space $(X_i,\star_i)$ is geodesic and proper. If all the projections $\pi_{i+1} : X_{i+1} \to X_{i}$ are base-point-preserving and have the continuous path lifting property, then $X_\infty$ is a complete geodesic metric space and for each pair of points $x = (x_1,x_2,\dots), y = (y_1,y_2,\dots) \in X_\infty$ there is $\eta_\infty : [0,1] \to X_\infty$ and $\gamma^{\sigma(n)} : [0,1] \to X_{\sigma(n)}$ for a strictly increasing sequence $\sigma : \N \to \N$ such that:
\begin{enumerate}
	\item [(A)] $\eta_\infty$ is a geodesic parametrized proportionally to arc length connecting $x$ with $y$ in $X_\infty$.
	\item [(B)] Each $\gamma^{\sigma(n)}$ is a geodesic parametrized proportionally to arc length connecting $x_{\sigma(n)}$ with $y_{\sigma(n)}$,
	\item [(C)] $\pi^\infty_i \circ \eta_\infty = \lim_{n \to \infty} \pi^{\sigma(n)}_i \circ \gamma^{\sigma(n)}$ for all $i \in \N$.
\end{enumerate}
\end{prop}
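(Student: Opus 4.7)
Completeness of $X_\infty$ is immediate from Lemma~\ref{firstlem}(2), since properness implies completeness at each finite level. For the geodesic statement, my plan is a diagonal Arzel\`a--Ascoli argument: produce geodesics in each finite-level $X_n$, project them down to the other levels, extract uniformly convergent subsequences level by level via a diagonal procedure, and assemble the limits into a point of $X_\infty$.

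Set $L \defl d_\infty(x,y)$. Since each $X_n$ is geodesic, for every $n \in \N$ I pick a geodesic $\gamma^n : [0,1] \to X_n$ from $x_n$ to $y_n$ parametrized proportionally to arc length, so that $\Lip(\gamma^n) = d_n(x_n,y_n) \leq L$ (using that $\pi^\infty_n$ is $1$-Lipschitz). For each fixed $i \in \N$ the curves $(\pi^n_i \circ \gamma^n)_{n \geq i}$ are $L$-Lipschitz, run from $x_i$ to $y_i$, and take values in the compact ball $\B(x_i, L) \subset X_i$ by properness. Arzel\`a--Ascoli plus a standard diagonal extraction produces a strictly increasing $\sigma : \N \to \N$ such that for every $i \in \N$,
\[
\eta_i \defl \lim_{n \to \infty} \pi^{\sigma(n)}_i \circ \gamma^{\sigma(n)}
\]
exists as a uniform limit on $[0,1]$, is $L$-Lipschitz, and satisfies $\eta_i(0) = x_i$, $\eta_i(1) = y_i$. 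Passing to the limit in the identity $\pi^i_j \circ \pi^{\sigma(n)}_i \circ \gamma^{\sigma(n)} = \pi^{\sigma(n)}_j \circ \gamma^{\sigma(n)}$ and using continuity of $\pi^i_j$ gives the compatibility $\pi^i_j \circ \eta_i = \eta_j$ for all $j \leq i$.

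I then define $\eta_\infty(t) \defl (\eta_i(t))_{i \in \N}$. The compatibility just established together with the bound
\[
\sup_{i \in \N} d_i(\star_i, \eta_i(t)) \leq \sup_{i \in \N} d_i(\star_i, x_i) + L < \infty
\]
shows $\eta_\infty(t) \in X_\infty$, while Lemma~\ref{firstlem}(3) yields
\[
d_\infty(\eta_\infty(s), \eta_\infty(t)) = \lim_{i \to \infty} d_i(\eta_i(s), \eta_i(t)) \leq L|t-s|,
\]
so $\eta_\infty$ is $L$-Lipschitz, and in particular continuous. Since $\eta_\infty(0) = x$, $\eta_\infty(1) = y$, and $L(\eta_\infty) \leq L = d_\infty(x,y) \leq L(\eta_\infty)$, equality holds throughout and $\eta_\infty$ is a geodesic. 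Applying the same length-additivity argument on every subinterval of $[0,1]$ yields $L(\eta_\infty|_{[s,t]}) = L|t-s|$, i.e.\ parametrization proportional to arc length. By construction $\pi^\infty_i \circ \eta_\infty = \eta_i = \lim_n \pi^{\sigma(n)}_i \circ \gamma^{\sigma(n)}$, which gives (A), (B) and (C).

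The main obstacle is the simultaneous extraction of a single sequence $\sigma$ whose pushdown to every $X_i$ converges; this is where properness is crucial, providing the compactness of $\B(x_i, L)$ needed for Arzel\`a--Ascoli at every level, while the diagonal procedure upgrades convergence from each fixed level to all levels along a common subsequence. The unique path lifting hypothesis is used only implicitly: by Lemma~\ref{uniqueliftproperties}(c) each $\eta_i$ becomes automatically the unique lift of $\eta_{i-1}$ starting at $x_i$, matching the inverse-limit structure described in Lemma~\ref{uniqueliftinfinity}.
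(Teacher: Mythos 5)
Your proof is correct, but it takes a genuinely different route from the paper's. The paper first lifts each finite-level geodesic $\gamma^n$ to a curve $\gamma^n_\infty$ in $X_\infty$ starting at the common point $x$ (via Lemma~\ref{uniqueliftinfinity}, hence using the unique path lifting property), then applies Arzel\`a--Ascoli with a diagonal extraction to the projections $\pi^\infty_i \circ \gamma^n_\infty$; since for $i$ above the level $n$ these curves are lifts rather than push-downs of $\gamma^n$, the compatibility $\pi_{i+1}\circ\eta_{i+1}=\eta_i$ of the limit curves is not a continuity statement and is exactly where the continuous path lifting hypothesis is invoked. You instead only ever push the geodesics downward, working with $\pi^{\sigma(n)}_i\circ\gamma^{\sigma(n)}$ for $\sigma(n)\geq i$, so the compatibility of the limits $\eta_i$ follows from continuity of the bonding maps alone, and the assembled curve $\eta_\infty$ is seen to be a geodesic by the elementary length comparison $L(\eta_\infty)\leq d_\infty(x,y)\leq L(\eta_\infty)$. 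As a result your argument never uses the unique or continuous path lifting property (your closing remark about Lemma~\ref{uniqueliftproperties}(c) is decorative, not load-bearing), and it actually establishes the slightly more general fact that $X_\infty$ is geodesic, with properties (A)--(C), for any inverse sequence of proper geodesic spaces with $1$-Lipschitz base-point-preserving bonding maps. What the paper's route buys is that all approximating curves are anchored at the single point $x\in X_\infty$ and the limit geodesic is exhibited directly as a lift of each $\eta_i$, which is the mechanism reused afterwards (in Theorem~\ref{tree_thm} and Corollary~\ref{approx_cor}); what your route buys is economy of hypotheses and a simpler verification of the compatibility step.
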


\begin{proof}
Note that proper metric spaces are in particular complete, so it already follows from Lemma~\ref{firstlem} that $X_\infty$ is a complete metric space. For each $n \in \N$ let $\gamma^n : [0,1] \to X_k$ be a geodesic parametrized proportional to arc length connecting $x_n$ with $y_n$. This implies that $\Lip(\gamma^n) \leq d_n(x_n,y_n)$ for all $n \in \N$. From Lemma~\ref{uniqueliftinfinity} it follows that the lift $\gamma_\infty^n : [0,1] \to X_\infty$ of $\gamma^n$ with $\gamma_\infty^n(0) = x$ satisfies
\[
\Lip(\gamma_\infty^n) \leq \Lip(\gamma^n) \leq d_n(x_n,y_n) \leq d_\infty(x,y) \ .
\]
Hence the path $\gamma_\infty^n$ is contained in the closed ball $\B(x,d_\infty(x,y))$. Since each $\pi^\infty_i$ is $1$-Lipschitz by Lemma~\ref{firstlem}, the path $\pi^\infty_i \circ \gamma_\infty^n$ is contained in $\B(x_i,d_\infty(x,y))$, starts at $x_i$ and satisfies $\Lip(\pi^\infty_i \circ \gamma_\infty^n) \leq d_\infty(x,y)$ for all $i$ and $n$. Since $\B(x_1,d_\infty(x,y))$ is compact, the theorem of Arzel\`a-Ascoli guarantees a subsequence $(\sigma(1,n))_{n\in\N}$ of $(n)_{n\in\N}$ such that $(\pi^\infty_1 \circ \gamma_\infty^{\sigma(1,n)})_{n\in\N}$ converges uniformly to some path $\eta_1$. Recursively we find for every $i \geq 2$ a subsequence $(\sigma(i,n))_{n\in\N}$ of $(\sigma(i-1,n))_{n\in\N}$ such that $(\pi^\infty_i \circ \gamma_\infty^{\sigma(i,n)})_{n\in\N}$ converges uniformly to some path $\eta_i$. For the diagonal sequence it holds that
\[
\lim_{n \to \infty} \pi^\infty_i \circ \gamma_\infty^{\sigma(n,n)} = \eta_i
\]
uniformly for all $i \in \N$. We have the following properties of these limits for all $i \in \N$:
\begin{enumerate}
	\item $\eta_i(0) = x_i$,
	\item$\eta_i(1) = y_i$,
	\item $\Lip(\eta_i) \leq d_\infty(x,y)$,
	\item $\pi_{i+1}(\eta_{i+1}) = \eta_i$.
\end{enumerate}
(1) is clear by construction since $\pi^\infty_i \circ \gamma_\infty^n(0) = x_i$ for all $i,n \in \N$. Similarly, (2) follows from the fact that $\pi^\infty_i \circ \gamma_\infty^n(1) = y_i$ for all $i \leq n$. (3) is a consequence of $\Lip(\pi^\infty_i \circ \gamma_\infty^n) \leq d_\infty(x,y)$ for all $i,n \in \N$. To see (4) note that the path $\pi^\infty_{i+1} \circ \gamma_\infty^{\sigma(n,n)}$ is the unique lift of $\pi^\infty_{i} \circ \gamma_\infty^{\sigma(n,n)}$ starting at $x_{i+1}$ for all $i,n \in \N$. The continuous path lifting property now implies that $\eta_{i+1}$ is the unique lift of $\eta_i$ for each $i \in \N$. Hence (4) holds.

It now follows from (1),(3),(4) and Lemma~\ref{uniqueliftinfinity} that $\eta_\infty(t) = (\eta_1(t),\eta_2(t),\dots)$ is for each $i \in \N$ the unique lift of $\eta_i$ starting at $x$. Moreover $\Lip(\eta_\infty) \leq d_\infty(x,y)$ by (3) and Lemma~\ref{uniqueliftproperties}. The equality $\eta_\infty(1) = y$ holds because of (2) and hence $\eta_\infty$ is a geodesic connecting $x$ with $y$. The bound $\Lip(\eta_\infty) \leq d_\infty(x,y)$ also implies that $\eta_\infty$ is parametrized proportional to arc length.
\end{proof}

Note that in the setting of the proposition above the projections $\pi^{\infty}_i : X_\infty \to X_i$ in general do not have the continuous path lifting property as we will demonstrate in Remark~\ref{rem:limitcontpathlifing} below.

\section{Group structures and their limits}

 We consider the case where each metric space $X_i$ has a group structure making the distance left invariant. The following lemma shows that then also the metric space $X_\infty$ admits a group structure making the distance left invariant.

\begin{lem}
\label{grouplem}
Assume that $(X_i,e_i)_{i \in I}$ is a collection of pointed metric groups together with maps $\pi^{i}_j : X_{i} \to X_{j}$ for $i \geq j$ such that:
\begin{enumerate}
	\item [(A)] $e_i$ is the identity element of $X_i$,
	\item [(B)] the projections $\pi^{i}_j : X_{i} \to X_{j}$ are homomorphisms and submetries,
	\item [(C)] left translations are isometries, i.e.,  for each $g_i,x_i,y_i \in X_i$ it holds that
\[
d_i(g_i \cdot x_i, g_i \cdot y_i) = d_i(x_i, y_i) \ .
\]
\end{enumerate}
Then the limit $(X_\infty,e_\infty)$ is a group with identity element $e_\infty = (e_i)_{i \in I}$, and operations $(x_i)_{i \in I}^{-1} \defl (x_i^{-1})_{i \in I}$ as well as $(x_i)_{i \in I} \cdot (y_i)_{i \in I} \defl (x_i \cdot y_i)_{i \in I}$. Moreover, left translations in $X_\infty$ are isometries with respect to $d_\infty$.
\end{lem}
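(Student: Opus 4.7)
The plan is to verify that the componentwise operations $(x_i)_{i\in I}\cdot(y_i)_{i\in I} \defl (x_i\cdot y_i)_{i\in I}$ and $(x_i)_{i\in I}^{-1}\defl(x_i^{-1})_{i\in I}$ actually land in $X_\infty$, and then that the group axioms and left invariance pass to the limit in a purely componentwise fashion. Here the base point of each $X_i$ is understood to be $\star_i = e_i$, so that $\star_\infty = (e_i)_{i\in I}$.

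The first and essentially only nontrivial point is showing that the product $(x_i y_i)_{i\in I}$ belongs to $X_\infty$ whenever both factors do. Compatibility with the bonding maps is immediate from (B): since each $\pi^i_j$ is a homomorphism, $\pi^i_j(x_i y_i)=\pi^i_j(x_i)\pi^i_j(y_i)=x_j y_j$. For the required uniform bound, I would use left invariance (C) to write
\[
d_i(e_i, x_i y_i) \leq d_i(e_i, x_i) + d_i(x_i, x_i y_i) = d_i(e_i, x_i) + d_i(e_i, y_i),
\]
so that $\sup_{i\in I} d_i(e_i, x_i y_i) \leq \sup_{i\in I} d_i(e_i, x_i) + \sup_{i\in I} d_i(e_i, y_i) < \infty$. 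For inverses, $\pi^i_j(x_i^{-1})=\pi^i_j(x_i)^{-1}=x_j^{-1}$ again by (B), and (C) gives
\[
d_i(e_i, x_i^{-1}) = d_i(x_i\cdot e_i, x_i\cdot x_i^{-1})\cdot\text{(wrong direction)},
\]
so instead I use $d_i(e_i, x_i^{-1}) = d_i(x_i, e_i)$, which follows from left translating by $x_i$: $d_i(x_i, x_i\cdot x_i^{-1}) = d_i(e_i, x_i^{-1})$ by (C). Hence $\sup_{i\in I} d_i(e_i, x_i^{-1}) = \sup_{i\in I} d_i(e_i, x_i)<\infty$, so $(x_i^{-1})_{i\in I}\in X_\infty$.

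Once closure under the operations is established, associativity, the identity axiom with $e_\infty = (e_i)_{i\in I}$, and the inverse axiom all hold componentwise, so $X_\infty$ is a group. Finally, for left invariance: given $g=(g_i)_{i\in I}, x=(x_i)_{i\in I}, y=(y_i)_{i\in I}\in X_\infty$, the products $g\cdot x$ and $g\cdot y$ lie in $X_\infty$ by the preceding paragraph, and
\[
d_\infty(g\cdot x, g\cdot y) = \sup_{i\in I} d_i(g_i x_i, g_i y_i) = \sup_{i\in I} d_i(x_i, y_i) = d_\infty(x,y)
\]
by direct application of (C) in every coordinate.

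The only subtlety I anticipate is bookkeeping: one must be careful that the supremum definition of $d_\infty$ interacts well with the triangle-inequality manipulation used for the boundedness check, and that the choice of $\star_i = e_i$ is consistent with the ambient framework of Section~\ref{section:inverselimit}. No compactness, completeness, or path-lifting input is needed, so this lemma is essentially a formal consequence of the definitions together with left invariance.
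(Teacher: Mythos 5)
Your proposal is correct and follows essentially the same route as the paper: componentwise operations, compatibility with the bonding maps via the homomorphism hypothesis (B), boundedness of $\sup_i d_i(e_i,\cdot)$ via left invariance (C) plus the triangle inequality, and left invariance of $d_\infty$ by applying (C) coordinatewise under the supremum. The only (immaterial) difference is that you bound $d_i(e_i,x_iy_i)$ directly by $d_i(e_i,x_i)+d_i(e_i,y_i)$, whereas the paper first closes under inverses and writes $d_i(e_i,x_iy_i)=d_i(x_i^{-1},y_i)$; also, the identity $d_i(e_i,x_i^{-1})=d_i(x_i\cdot e_i,x_i\cdot x_i^{-1})$ you flagged as ``wrong direction'' is in fact correct and is exactly the paper's computation.
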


\begin{proof}
We need to show that
\begin{enumerate}
	\item $e_\infty \in X_\infty$,
	\item $x^{-1} \in X_\infty$ if $x \in X_\infty$,
	\item $x \cdot y \in X_\infty$ if $x, y \in X_\infty$,
	\item $d_\infty(g \cdot x, g \cdot y) = d_\infty(x, y)$ for all $g, x, y \in X_\infty$.
\end{enumerate}
(1) is clear by definition. To see (2) let $x = (x_i)_{i \in I} \in X_\infty$. By assumption it holds that
\[
\sup_{i\in I} d_i(e_i, x_i^{-1}) = \sup_{i\in I} d_i(x_i \cdot e_i, x_i \cdot x_i^{-1} \cdot e_i) = \sup_{i\in I} d_i(x_i, e_i) < \infty \ .
\]
Because each $\pi^i_j$ is a homomorphism $\pi^{i}_j(x_i^{-1}) = \pi^{i}_j(x_i)^{-1} = x_i^{-1}$ and this implies that $(x_i)_{i \in I}^{-1} \in X_\infty$. For (3) note that by left invariance of each $d_i$ and by (2) it holds
\[
\sup_{i\in I} d_i(e_i, x_i \cdot y_i) = \sup_{i\in I} d_i(x_i^{-1}, y_i) \leq \sup_{i\in I} d_i(x_i^{-1}, e_i) + d_i(e_i, y_i) < \infty \ .
\]
Moreover, $\pi^{i}_j(g_{i}\cdot x_{i}) = \pi^{i}_j(g_{i})\cdot \pi^{i}_j(x_{i}) = g_j \cdot x_i$ again because all $\pi^i_j$ are homomorphisms. Thus $(x_i \cdot y_i)_{i \in I} \in X_\infty$. Clearly, 
\begin{align*}
d_\infty(g \cdot x, g \cdot y) & = \sup_{i\in I} d_i(g_i \cdot x_i, g_i \cdot y_i) = \sup_{i\in I} d_i(x_i, y_i) \\
 & = d_\infty(x, y) \ ,
\end{align*}
and shows (4).
\end{proof}

We will see later in Theorem~\ref{treenogroup_thm} that right translations in $X_\infty$ are in general not continuous, and the resulting limit space thus is not a topological group. A particular example of this behaviour is given by the sequence $X_i = G^i(\R^n)$ of free nilpotent Lie groups of step $i$ over $\R^n$ equipped with the Carnot-Carath\'eodory metric.

\section{Space of signatures as an inverse limit}

We first review the basic definitions and results about signatures and free nilpotent Lie groups, we mainly refer to \cite{FV}. As defined in \cite[Definition~7.2]{FV} the \textbf{step} $\mathbf k$ \textbf{signature} of a rectifiable path $\gamma : [a,b] \to \R^n$ of is given by
\begin{align*}
S_k(\gamma)_{a,b} & \defl \left(1,\int_{a < t < b} d\gamma_t, \dots, \int_{a < t_1 < \cdots < t_k < b} d\gamma_{t_1} \otimes \cdots \otimes d\gamma_{t_k}\right) \\
 & \in T^k(\R^n) \defl \prod_{i=0}^k(\R^n)^{\otimes i} \ .
\end{align*}
Note that the iterated integrals are interpreted as Riemann-Stieltjes integrals and thus well defined and independent of the particular parametrization of $\gamma$. There are natural coordinate projections $\xi_k : T^l(\R^n) \to (\R^n)^{\otimes k}$ and $\pi_{k}^l : T^l(\R^n) \to T^k(\R^n)$ for $0 \leq k \leq l$. For $g,h \in T^k(\R^n)$ the truncated tensor product is defined by
\[
g \otimes_k h \defl \sum_{0 \leq i+j \leq k} \xi_i(g) \otimes \xi_j(h) \in T^k(\R^n) \ ,
\]
which makes $(T^k(\R^n),+,\otimes_k)$ into an associative algebra with neutral element $1_k \defl (1,0,\dots,0)$, see \cite[Proposition~7.4]{FV}. Similarly one defines the infinite tensor algebra $T^\infty(\R^n) \defl \prod_{i=0}^\infty(\R^n)^{\otimes i}$. 

Given a path $\gamma : [a,b] \to \R^n$ of bounded variation, Chen's theorem states that for $a \leq s < t < u \leq b$
\begin{equation}
\label{eq:chen}
S_k(\gamma)_{s,u} = S_k(\gamma)_{s,t}\otimes_k S_k(\gamma)_{t,u} \ ,
\end{equation}
see e.g.\ \cite[Theorem~7.11]{FV}.

Set $\mathfrak t^k(\R^n) \defl \{g \in T^k(\R^n) : \xi_0(g) = 0\}$. It is shown in \cite[Proposition~7.17]{FV} that the set $1_k + \mathfrak t^k(\R^n) \subset T^k(\R^n)$ together with the truncated tensor product $\otimes_k$ of $T^k(\R^n)$ is a Lie group. Similarly, the set $\mathfrak t^k(\R^n) \subset T^k(\R^n)$ together with the bilinear map
\[
[g,h] \defl g\otimes_k h - h \otimes_k g
\]
is a Lie algebra, \cite[Proposition~7.19]{FV}. By $\mathfrak g^k(\R^n) \subset \mathfrak t^k(\R^n)$ we denote the smallest Lie subalgebra that contains $(\R^n)^{\otimes 1} \subset \mathfrak t^k(\R^n)$. The exponential map $\exp : \mathfrak t^k(\R^n) \to 1_k + \mathfrak t^k(\R^n)$, $\exp(a) \defl 1_k + \sum_{i=1}^k \frac{a^{\otimes i}}{i!}$, maps $\mathfrak g^k(\R^n)$ onto a closed Lie subgroup $G^k(\R^n) \defl \exp(\mathfrak g^k(\R^n))$ of $1_k + \mathfrak t^k$, the \textbf{free nilpotent Lie group of step} $\mathbf k$ \textbf{over} $\pmb{\R^n}$, \cite[Corollary~7.27]{FV}. The straight path $\gamma : [0,1] \to \R^n$ given by $\gamma(t) = tv$ for some $v \in \R^n$ has signature $S_k(\gamma)_{0,1} = \exp(v)$, \cite[Example~7.21]{FV}. Chow's theorem states that for any $g \in G^k(\R^n)$ there exists a piecewise linear path $\gamma : [0,1] \to \R^n$ such that $g = S_k(\gamma)_{0,1}$, see e.g.\ \cite[Theorem~7.28]{FV}. Moreover,
\[
G^k(\R^n) = \{S_k(\gamma)_{0,1}\ :\ \gamma : [0,1] \to \R^n \mbox{ is rectifiable}\} \ ,
\]
\cite[Theorem~7.30]{FV}. The Carnot-Carath\'eodory norm on $G^k(\R^n)$ is defined by
\[
\|g\| \defl \inf \{L(\gamma)\ :\ \gamma : [0,1] \to \R^n \mbox{ is rectifiable and } S_k(\gamma)_{0,1}=g\} \ .
\]
This infimum is achieved by some rectifiable path $\gamma$, see \cite[Theorem~7.32]{FV}, and the \textbf{Carnot-Carath\'eodory metric} $d_k(g,h) \defl \|g^{-1}\otimes_k h\|$ is a left invariant geodesic metric on $G^k(\R^n)$, see e.g.\ \cite[Definition~7.41]{FV}. By construction it is clear that $\pi_{k}^l : G^l(\R^n) \to G^k(\R^n)$ is $1$-Lipschitz for any $1 \leq k \leq l$.

Let $\gamma : [a,b] \to \R^n \cong G^1(\R^n)$ be a rectifiable path and let $g \in G^k(\R^n)$ with $\pi_{1}^k(g) = \gamma(a)$. We define $\gamma_k : [a,b] \to G^k(\R^n)$ by
\[
\gamma_k(t) \defl g \otimes_k S_k(\gamma)_{a,t} \ ,
\]
for $t \in [a,b]$. Clearly $\pi_{1}^k \circ \gamma_k = \gamma$, $\gamma_k(0) = g$ and by \cite[Proposition~7.59]{FV} and the left invariance of $d_k$ we obtain that
\[
L(\gamma_k) = L(t \mapsto S_k(\gamma)_{a,t}) = L(\gamma) \ .
\]
This implies that all the projections
\[
\pi_{k+1} : (G^{k+1}(\R^n),d_{k+1},1_{k+1}) \to (G^k(\R^n),d_k,1_k)
\]
have the unique path lifting property as defined earlier in Definition \ref{def:uniquelift}. The fact that these projections also have the continuous path lifting property, in the sense of Definition \ref{def:continuouslift}, is certainly known but we could not find a reference. For the sake of convenience we add a proof here.

\begin{lem}
\label{contpath_lem}
For every $k \in \N$, the projection
\[
\pi_{k+1} : (G^{k+1}(\R^n),d_{k+1},1_{k+1}) \to (G^k(\R^n),d_k,1_k)
\]
has the continuous path lifting property. Moreover, each each metric space $G^k(\R^n)$ is geodesic and proper.
\end{lem}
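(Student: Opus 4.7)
The plan splits into three pieces: first the geodesic and proper properties of $G^k(\R^n)$, then a reduction of the continuous path lifting statement to the identity, and finally an Arzel\`a--Ascoli argument exploiting the unique path lifting property already established in the paragraph before the lemma.

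For the geodesic and proper part, most of the work is already referenced in the excerpt. The geodesic property is essentially Theorem~7.32 and Definition~7.41 of \cite{FV}: the infimum in the definition of $\|\cdot\|$ is attained, and by left invariance, for any $g, h \in G^k(\R^n)$, lifting a length-minimizer of $g^{-1}\otimes_k h$ to a horizontal path starting at $g$ yields a geodesic from $g$ to $h$ (parametrized proportional to arc length after reparametrization). For properness I would invoke Hopf--Rinow for length spaces: $d_k$ is a length metric by construction, induces the standard manifold topology inherited from $G^k(\R^n) \subset 1_k + \mathfrak t^k(\R^n)$ (so the space is locally compact), and is complete, so closed bounded sets are compact.

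For the continuous path lifting, I would first use the fact that $\pi_{k+1}$ is a group homomorphism and that left translations by any element $g \in G^{k+1}(\R^n)$ are isometries satisfying $\pi_{k+1}(g \cdot x) = \pi_{k+1}(g) \cdot \pi_{k+1}(x)$: translating $\gamma_n$ and $\gamma$ by $y^{-1}$ and the lifts by the inverse of their common starting point, we may assume $y = 1_k$ and $\bar\gamma_n(0) = \bar\gamma(0) = 1_{k+1}$. Set $L \defl \sup_n \Lip(\gamma_n) < \infty$. By Lemma~\ref{uniqueliftproperties}(b), each $\bar\gamma_n$ is $L$-Lipschitz, so $\bar\gamma_n([0,1]) \subset \B(1_{k+1}, L)$, which is compact by properness.

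The core step is then the following Arzel\`a--Ascoli argument. The family $\{\bar\gamma_n\}$ is equicontinuous and pointwise precompact, so any subsequence admits a further subsequence converging uniformly to some $L$-Lipschitz path $\tilde\gamma : [0,1] \to G^{k+1}(\R^n)$ with $\tilde\gamma(0) = 1_{k+1}$. Since $\pi_{k+1}$ is continuous and $\pi_{k+1} \circ \bar\gamma_n = \gamma_n \to \gamma$ pointwise, we obtain $\pi_{k+1} \circ \tilde\gamma = \gamma$; being Lipschitz, $\tilde\gamma$ is rectifiable. The unique path lifting property of $\pi_{k+1}$ (already verified in the discussion preceding the lemma) forces $\tilde\gamma = \bar\gamma$. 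Hence every subsequence has a further subsequence converging to $\bar\gamma$, which yields $\bar\gamma_n(t) \to \bar\gamma(t)$ for every $t$, as required.

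The main obstacle is really just pinning down properness cleanly; once that is in place, the continuous lifting falls out of the compactness--plus--uniqueness pattern without having to argue directly about continuity of iterated integrals in the 1-variation topology. A minor subtlety I would double-check is that the reduction via left translation is compatible with the statement of Definition~\ref{def:continuouslift}, which requires the lifts $\bar\gamma_n$ and $\bar\gamma$ to share a starting point in $\pi_{k+1}^{-1}(y)$; this is exactly preserved by left translation because $\pi_{k+1}$ is a homomorphism and left translations commute with the lifting construction $\gamma_k(t) = g \otimes_k S_k(\gamma)_{a,t}$ exhibited just before the lemma.
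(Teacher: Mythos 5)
Your argument is correct, but it proves the continuous path lifting property by a genuinely different route than the paper. The paper works directly with iterated integrals: it fixes the coordinate distance $\rho(x,y)=\max_i|\xi_i(x)-\xi_i(y)|$ on $1_k+\mathfrak t^k(\R^n)$, quotes the comparison $C^{-1}\rho(1_k,g)\leq\max\{d_k(1_k,g),d_k(1_k,g)^k\}\leq C\rho(1_k,g)$ from \cite[Proposition~7.45]{FV} (which simultaneously gives properness and the topology identification), and then shows by induction on $k$ that $S_k(\gamma_m)_{0,t}\to S_k(\gamma)_{0,t}$, using the stability of Riemann--Stieltjes integrals $\int f_m\,dg_m\to\int f\,dg$ under uniformly Lipschitz pointwise convergence (Helly/Young, or metric currents); left invariance and unique lifting then give the general case. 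Your proof replaces this quantitative convergence of signatures by a soft compactness argument: uniform Lipschitz bounds on the lifts (Lemma~\ref{uniqueliftproperties}(b)), properness of $G^{k+1}(\R^n)$, Arzel\`a--Ascoli, and the observation that any subsequential limit of the lifts is a rectifiable lift of $\gamma$ with the correct starting point, hence equals $\bar\gamma$ by uniqueness; the subsequence principle then yields convergence of the full sequence. This is valid, and it in fact proves a more general statement (any submetry with the unique path lifting property whose lifts preserve Lipschitz constants, with proper total space, automatically has the continuous path lifting property), whereas the paper's computation is more explicit and does not need Arzel\`a--Ascoli. Two small points you should tighten: your assertion that $d_k$ induces the manifold topology is exactly the content of \cite[Proposition~7.45]{FV} and should be cited rather than asserted, and completeness of $(G^k(\R^n),d_k)$ (needed for Hopf--Rinow) deserves a word --- e.g.\ local compactness plus homogeneity under left translations, or the paper's direct argument that closed $d_k$-balls are $\rho$-closed and $\rho$-bounded, hence compact; the reduction by left translation at the start of your lifting argument is harmless but not needed, since the lifts already share a starting point.
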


\begin{proof}
The standard topology on $1_k + \mathfrak t^k(\R^n) \subset T^k(\R^n) = \prod_{i=0}^k (\R^n)^{\otimes i}$ is induced by the distance function
\[
\rho(x,y) \defl \max_{i=1,\dots,k}|\xi_i(x)-\xi_i(y)| \ .
\]
In fact, as shown in \cite[Proposition~7.45]{FV}, there is a constant $C \geq 1$ such that
\[
C^{-1}\rho(1_k,g) \leq \max\{d_k(1_k,g),d_k(1_k,g)^k\}\leq C\rho(1_k,g)
\]
for all $g \in G^k(\R^n)$. Thus the topology of $G^k(\R^n)$ agrees with the topology induced by $\rho$ and closed balls in $G^k(\R^n)$ are compact. Thus each $G^k(\R^n)$ is geodesic and proper.

If $f,g : [0,1] \to \R$ are Lipschitz, then we claim that also the function $\iota(t) \defl \int_0^t f\, dg$ is  Lipschitz. Indeed, 
for $0 \leq s \leq t \leq 1$ it holds that
\begin{align}
\nonumber
|\iota(s)-\iota(t)| & = \left|\int_s^t f(t)g'(t)\, dt \right| \leq \int_s^t |f(t)g'(t)| \, dt \\
\nonumber
 & \leq \|f\|_\infty\Lip(g)|t-s| \\
\label{lipschitz_eq}
 & \leq (f(0) + \Lip(f))\Lip(g)|t-s| \ .
\end{align}
Moreover, we claim that if $(f_m)_{m \in \N}$ and $(g_m)_{m \in \N}$ are sequences of Lipschitz functions on $[0,1]$ with $\sup_m \{\Lip(f_m),\Lip(g_m)\} < \infty$ that converge pointwise (and thus uniformly) to $f$ and $g$, respectively, then
\begin{equation}
\label{convergence_eq}
\lim_{m\to\infty}\int_0^1 f_m \, dg_m = \int_0^1 f \, dg \ .
\end{equation}
This follows for example by properties of the Riemann-Stieltjes integral, see \cite{Y} for more general convergence results where the statement above is attributed to Helly \cite{H}. Alternatively, one has another proof of \eqref{convergence_eq} by osserving that  the characteristic function $\chi_{[0,1]}$ induces a $1$-dimensional metric current  on $\R$, see \cite[Example~3.2]{AK}. 

Assume now that $\gamma_m : [0,1] \to \R^n$, $m \in \N$, is a sequence of Lipschitz paths with $\gamma_m(0) = 0$, $\sup_m \Lip(\gamma_m) < \infty$, and $\lim_{m\to\infty} \|\gamma_m - \gamma\|_\infty = 0$. It follows by induction on $k$ using \eqref{lipschitz_eq} and \eqref{convergence_eq} that
\[
\lim_{m\to\infty} \rho(S_k(\gamma_m)_{0,t},S_k(\gamma)_{0,t}) = 0 \ ,
\]
for all $t \in [0,1]$. By the discussion in the first part of the proof this implies that
\[
\lim_{m\to\infty}  d_k(S_k(\gamma_m)_{0,t},S_k(\gamma)_{0,t}) = 0 \ .
\]
The general continuous lifting property now follows from the left invariance of the metric and the unique path lifting property for the projections $\pi_{1}^k : G^k(\R^n) \to G^1(\R^n) \cong \R^n$.
\end{proof}

As a consequence of the results stated so far and the uniqueness of signatures, a result by Hambly and Lyons \cite{HL}, the inverse limit $G^\infty(\R^n)$ is a metric tree. Here are the details. For a precise definition of topological and metric trees we refer to the beginning of the next section. Let $S$ be the set of signatures of rectifiable paths, i.e., $S$ is the collection of all elements
\[
\left(1,\int_{0 < t < 1} d\gamma_t, \dots, \int_{0 < t_1 < \cdots < t_k < 1} d\gamma_{t_1} \otimes \cdots \otimes d\gamma_{t_k}, \dots \right) \in \prod_{i=0}^\infty(\R^n)^{\otimes i} \ ,
\]
where $\gamma : [0,1] \to \R^n$ ranges over all rectifiable paths in $\R^n$. Also here Chen's equality \eqref{eq:chen} holds, and thus $S$ naturally has a group structure $(S,\otimes,1)$, where $1$ is the element $(1,0,0,\dots)$ in $\prod_{i=0}^\infty(\R^n)^{\otimes i}$. 

\begin{thm}
\label{tree_thm}
There is a natural group isomorphism identifying the space of signatures $S$ with $G^\infty(\R^n)$, where $(G^\infty(\R^n),d_\infty,1_\infty)$ is the inverse limit  of the free nilpotent Lie groups $(G^k(\R^n),d_k,1_k)$ with left invariant Carnot-Carath\'eodory metrics $d_k$ as obtained in Section~\ref{section:inverselimit}. Further, the metric space $G^\infty(\R^n)$ is a complete metric tree.
\end{thm}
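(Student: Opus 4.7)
The plan is to prove the theorem in three parts: (i) establish the group isomorphism $S \cong G^\infty(\R^n)$; (ii) derive completeness and the geodesic property of $G^\infty(\R^n)$; and (iii) deduce the tree property from Hambly-Lyons uniqueness \cite{HL}.

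For (i), I would define $\Phi : S \to G^\infty(\R^n)$ by sending a signature $s = (1, s_1, s_2, \dots) \in S$ to the sequence of its truncations $(g_k)_{k \in \N}$ with $g_k = (1, s_1, \dots, s_k) \in G^k(\R^n)$. Compatibility with the bonding projections $\pi^l_k$ is immediate because these are themselves truncation maps, and if $s$ is the signature of a rectifiable path $\gamma$, then $g_k = S_k(\gamma)_{0,1}$ satisfies the uniform bound $d_k(1_k, g_k) \leq L(\gamma) < \infty$ by definition of the Carnot-Carath\'eodory norm, so $\Phi(s) \in G^\infty(\R^n)$. Chen's identity \eqref{eq:chen} makes $\Phi$ a group homomorphism, and injectivity is immediate from the component-wise definition. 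Surjectivity is the substantive point: given $g = (g_k) \in G^\infty(\R^n)$, Proposition~\ref{geodesic_prop} together with Lemma~\ref{contpath_lem} produces a rectifiable path $\eta : [0,1] \to G^\infty(\R^n)$ from $1_\infty$ to $g$, and its projection $\gamma = \pi^\infty_1 \circ \eta$ is a rectifiable path in $\R^n$ whose $G^k$-lift starting at $1_k$ is the explicit curve $t \mapsto S_k(\gamma)_{0,t}$; by Lemma~\ref{uniqueliftinfinity} this lift must coincide with $\pi^\infty_k \circ \eta$, giving $g_k = S_k(\gamma)_{0,1}$.

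For (ii), completeness follows directly from Lemma~\ref{firstlem}(2) combined with the properness (hence completeness) of each $G^k(\R^n)$ from Lemma~\ref{contpath_lem}, and the geodesic property is Proposition~\ref{geodesic_prop}.

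For (iii), the strategy is to show that $G^\infty(\R^n)$ contains no non-trivial simple loops. Given a rectifiable injective loop $\alpha : [0,1] \to G^\infty(\R^n)$ with $\alpha(0) = \alpha(1)$, its projection $\beta = \pi^\infty_1 \circ \alpha$ is a rectifiable loop in $\R^n$, and the closure $\alpha(0) = \alpha(1)$, read through the isomorphism from (i), forces the signature of $\beta$ to be trivial. Hambly-Lyons then implies that $\beta$ is tree-like, i.e., $\beta = \phi \circ \tau$ for a continuous $\tau : [0,1] \to T$ into an $\R$-tree $T$ and a continuous $\phi : T \to \R^n$. Whenever $\tau(s) = \tau(t)$, the subpath $\beta|_{[s,t]}$ is itself a tree-like loop with trivial signature, and by Lemma~\ref{uniqueliftinfinity} this forces $\alpha(s) = \alpha(t)$; injectivity of $\alpha$ on $[0,1)$ then forces $\tau$ to be injective on $[0,1)$, which is impossible for a non-constant path in a tree. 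The main obstacle I anticipate is two-fold: first, rigorously justifying that the tree factorisation of $\beta$ induces the fiber-level identification $\tau(s) = \tau(t) \Rightarrow \alpha(s) = \alpha(t)$, which requires a careful application of Hambly-Lyons to sub-loops of $\beta$; and second, promoting the conclusion from rectifiable to arbitrary continuous injective loops, required for the full topological tree property, which I would address by comparing any topological arc between two points with the geodesic joining them and showing the latter must agree with any such arc.
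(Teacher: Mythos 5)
Your parts (i) and (ii) are essentially the paper's argument. The isomorphism is the truncation map (the paper writes it in the opposite direction, as $\xi:G^\infty(\R^n)\to S$), injectivity and the homomorphism property are observed in the same way, and surjectivity is obtained exactly as you propose: Proposition~\ref{geodesic_prop} and Lemma~\ref{contpath_lem} give a rectifiable path from $1_\infty$ to a given point, and the unique path lifting property identifies its $k$-th coordinate with the running step-$k$ signature of the projected path. Completeness is Lemma~\ref{firstlem} together with properness of each $G^k(\R^n)$, as in the paper.

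For (iii) you take a genuinely different route. The paper never argues with loops: it takes two constant-speed geodesics with common endpoints, left-translates them to start at $1_\infty$, notes that injectivity of the lifted curves makes their projections to $\R^n$ tree reduced, and then applies the \emph{uniqueness} statement of Hambly--Lyons (tree-reduced paths with equal signature are reparametrizations of one another); the constant-speed normalization, via Lemmas~\ref{uniqueliftinfinity} and \ref{uniqueliftproperties}, removes the reparametrization and yields coincidence of the geodesics. You instead use the other facet of \cite{HL}: trivial signature implies tree-like, factor the projected loop as $\phi\circ\tau$ through an $\R$-tree, and use the easy converse (tree-like $\Rightarrow$ trivial signature) on sub-loops $\beta|_{[s,t]}$ with $\tau(s)=\tau(t)$, together with the group structure and unique lifting, to push the identification $\alpha(s)=\alpha(t)$ upstairs and contradict injectivity. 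This mechanism is sound, and it actually recovers the paper's statement: two distinct geodesics with common endpoints would produce a rectifiable simple closed curve, which your argument excludes. Your route proves a slightly stronger intermediate fact (uniqueness of \emph{rectifiable} arcs, not only of geodesics) at the cost of invoking the tree factorization; the paper's route is shorter because it only needs the uniqueness theorem.

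The one real gap is the final promotion to arbitrary continuous arcs: your closing plan (``compare a topological arc with the geodesic and show they must agree'') is precisely the statement to be proved, and no idea is offered for it; ruling out rectifiable simple loops does not by itself exclude non-rectifiable ones. Note that the paper does not address this either -- it proves uniqueness of constant-speed geodesics and concludes directly that $X_\infty$ is a metric tree. If you want to close it cleanly, do not attack non-rectifiable loops head on; instead deduce from your rectifiable statement that every geodesic triangle degenerates to a tripod (a non-degenerate geodesic triangle contains a rectifiable simple closed curve), so the space is $0$-hyperbolic and hence an $\R$-tree, and $\R$-trees are uniquely arcwise connected; Theorem~\ref{toptreeequiv_thm} can then be used for the topological formulation. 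With such an addendum your proof is complete and in fact slightly stronger than the argument given in the paper.
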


\begin{proof}
Set $(X_k,\star_k) \defl (G^k(\R^n),1_k)$ for all $k \in \N$ with the metric and projections defined earlier for $G^k(\R^n)$. Now $(x_1,x_2,\dots) \in X_\infty$ if and only if $\pi_{k+1}(x_{k+1}) = x_k$ for all $k \geq 1$ and $\sup_{k\in\N}\|x_k\| < \infty$. Let $\xi : X_\infty \to \prod_{i=0}^\infty(\R^n)^{\otimes i}$ be the map given by
\[
\xi(x_1,x_2,\dots) \defl (1,\xi_1(x_1),\xi_2(x_2),\dots) \ .
\]
We claim that $\xi$ is a group isomorphism onto $S$. First if $x \in \prod_{i=0}^\infty(\R^n)^{\otimes i}$ is the signature $S_\infty(\gamma)_{0,1}$ of a rectifiable path $\gamma : [0,1] \to \R^n$, then
\[
\|S_k(\gamma)_{0,1}\| \leq L(t \mapsto S_k(\gamma)_{0,t}) = L(\gamma) < \infty \ ,
\]
and hence $x$ is in the image of $\xi$. On the other side assume that $x = \xi(x_\infty)$ for some $x_\infty \in X_\infty$. Because each $\pi_k$ has the continuous path lifting property and each $X_k$ is proper and geodesic by Lemma~\ref{contpath_lem}, it follows from Proposition~\ref{geodesic_prop} that $X_\infty$ is a geodesic metric space. So let $\gamma : [0,1] \to X_\infty$ be a path of finite length with $\gamma(0) = 1_\infty$ and $\gamma(1) = x_\infty$. Because each $\pi^k_1 : X_k \to X_1 = \R^n$ has the unique path lifting property with lifts given by the step $i$ signature of a path, it holds that $x_k = S_k(\pi_1^\infty\circ\gamma)_{0,1}$ for all $k \in \N$. Thus $x = S_\infty(\pi_1^\infty \circ \gamma)_{0,1}$. It follows that $\xi(X_\infty) = S$. By construction it is clear that $\xi$ is injective. This implies that $\xi : X_\infty \to S$ is a bijection. By the construction of $\xi$, the group structure given on $X_\infty$ as defined in Lemma~\ref{grouplem} and the fact that the truncation operator $S \to G^k(\R^n)$ is a group homomorphism it is not hard to check that $\xi : X_\infty \to S$ is a group homomorphism and because $\xi$ is also injective, it is a group isomorphism.

We already know from Proposition~\ref{geodesic_prop} that $X_\infty$ is a complete geodesic metric space. Next we show that $X_\infty$ is a metric tree. Assume that $\gamma_1,\gamma_2 : [0,1] \to X_\infty$ are two geodesics with the same start and end points that are parametrized proportional to arc length, i.e., $L(\gamma_k|_{[0,t]}) = ct$ for some $c > 0$. We want to show that $\gamma_1 = \gamma_2$. By Lemma~\ref{grouplem}, the space $X_\infty$ is a group and $d_\infty$ is left invariant. We therefore can assume that $\gamma_1(0) = \gamma_2(0) = 1_\infty$. As obtained above, $\xi(\gamma_k(t)) = S_\infty(\pi_1^\infty\circ\gamma_k)_{0,t}$ for $k=1,2$ and $t \in [0,1]$. Because $\gamma_k(t)$ is injective, the paths $\pi_1^\infty\circ\gamma_k$ in $\R^n$ are tree reduced, see \cite{HL} for the definition. Because $S_\infty(\pi_1^\infty\circ\gamma_1)_{0,1} = S_\infty(\pi_1^\infty\circ\gamma_2)_{0,1}$, the uniqueness of signatures result due to Hambly and Lyons \cite{HL} implies that $\pi_1^\infty \circ \gamma_1$ and $\pi_1^\infty \circ \gamma_2$ are reparametrizations of each other. Because $\pi_1^\infty : X_\infty \to X_1$ has the unique path lifting property by Lemma~\ref{uniqueliftinfinity} it follows from Lemma~\ref{uniqueliftproperties} that $L(\pi_1^\infty \circ \gamma_1|_{[0,t]}) = L(\pi_1^\infty \circ \gamma_2|_{[0,t]}) = ct$ for all $t \in [0,1]$. Thus $\pi_1^\infty \circ \gamma_1 = \pi_1^\infty \circ \gamma_2$ and therefore also $\gamma_1 = \gamma_2$ again by Lemma~\ref{uniqueliftinfinity}. It follows that $X_\infty$ is a metric tree.
\end{proof}

Because of Proposition~\ref{geodesic_prop} it follows that any tree reduced path in $\R^n$ is the projection of a geodesic in $(G^\infty(\R^n),d_\infty)$. Together with the second part of Proposition~\ref{geodesic_prop}, the next result shows that paths in $\R^n$ can actually be approximated by projections of geodesics in some $G^k(\R^n)$.

\begin{cor}
\label{approx_cor}
For any (continuous) path $\gamma : [0,1] \to \R^n \cong G^1(\R^n)$ and any $\epsilon > 0$ there is some $k \geq 1$ and a geodesic $\gamma_k : [0,1] \to G^k(\R^n)$ such that $L(\gamma_k) \leq L(\gamma)$, $\pi^k_1(\gamma_i(\delta)) = \gamma(\delta)$ for $\delta = 0,1$ and 
\[
\|\gamma - \pi^k_{1} \circ \gamma_k\|_\infty < \epsilon \ .
\]
\end{cor}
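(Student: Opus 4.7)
The plan is to approximate $\gamma$ by a tree-reduced rectifiable curve and then invoke Proposition~\ref{geodesic_prop} together with the metric-tree structure from Theorem~\ref{tree_thm}. After translating so that $\gamma(0)=0$, I first observe that if $\gamma$ is not rectifiable then $L(\gamma) = \infty$ and the length bound is vacuous, so I may replace $\gamma$ by a fine inscribed polygon and assume $L(\gamma) < \infty$. I also assume $n \geq 2$, since for $n=1$ all tree-reduced paths are line segments and the statement degenerates.

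The central preliminary step is to construct a tree-reduced rectifiable $\gamma' : [0,1] \to \R^n$ with $\gamma'(0) = 0$, $\gamma'(1) = \gamma(1)$, $\|\gamma-\gamma'\|_\infty < \epsilon/2$, and, crucially, $L(\gamma') \leq L(\gamma)$. I would take an inscribed polygon $\gamma^{(N)}$ on a sufficiently fine partition $0 = t_0 < \cdots < t_N = 1$ chosen so that $\gamma$ is not polygonal on $\{t_i\}$; then $L(\gamma^{(N)})$ is a sum of chord lengths strictly smaller than $L(\gamma)$, producing a slack $\mu \defl L(\gamma) - L(\gamma^{(N)}) > 0$. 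Because $n \geq 2$, a generic perturbation of the interior vertices of size at most $\min\{\mu/(2N),\, \epsilon/4\}$ yields an injective (hence tree-reduced) polygon $\gamma'$ with $\|\gamma-\gamma'\|_\infty < \epsilon/2$ and $L(\gamma') \leq L(\gamma)$.

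With this $\gamma'$ at hand, set $y \defl S_\infty(\gamma') \in G^\infty(\R^n)$ and $y_k \defl S_k(\gamma')_{0,1}$. Since Lemma~\ref{contpath_lem} supplies the hypotheses of Proposition~\ref{geodesic_prop}, I would apply it to the pair $1_\infty,y$ to obtain a geodesic $\eta_\infty$ from $1_\infty$ to $y$ in $G^\infty(\R^n)$, a strictly increasing $\sigma : \N \to \N$, and geodesics $\gamma^{\sigma(k)}$ from $1_{\sigma(k)}$ to $y_{\sigma(k)}$ in $G^{\sigma(k)}(\R^n)$, all parametrized proportionally to arc length, with $\pi^{\sigma(k)}_1 \circ \gamma^{\sigma(k)} \to \pi^\infty_1 \circ \eta_\infty$ uniformly. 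By Theorem~\ref{tree_thm}, $G^\infty(\R^n)$ is a metric tree, $\eta_\infty$ is injective, and its projection $\pi^\infty_1 \circ \eta_\infty$ is tree-reduced: a tree-like subarc would have trivial signature on that subinterval, forcing $\eta_\infty$ to revisit a point. Since $\pi^\infty_1 \circ \eta_\infty$ has signature $y = S_\infty(\gamma')$ and $\gamma'$ is tree-reduced, Hambly-Lyons uniqueness identifies $\pi^\infty_1 \circ \eta_\infty$ with the constant-speed reparametrization $\bar\gamma'$ of $\gamma'$. Reparametrizing each $\gamma^{\sigma(k)}$ via $s_{\gamma'}(t) \defl L(\gamma'|_{[0,t]})/L(\gamma')$ (still a geodesic with the same endpoints and length) gives $\pi^{\sigma(k)}_1 \circ \gamma^{\sigma(k)} \to \bar\gamma' \circ s_{\gamma'} = \gamma'$ uniformly. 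For $k$ sufficiently large the triangle inequality then yields $\|\gamma - \pi^{\sigma(k)}_1 \circ \gamma^{\sigma(k)}\|_\infty < \epsilon$, while $L(\gamma^{\sigma(k)}) = d_{\sigma(k)}(1_{\sigma(k)}, y_{\sigma(k)}) \leq L(\gamma') \leq L(\gamma)$ and the endpoint conditions hold by construction.

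The main obstacle is the second step: building $\gamma'$ tree-reduced and sup-norm close to $\gamma$ with $L(\gamma') \leq L(\gamma)$. A naive perturbation of an inscribed polygon raises length, so one must first secure strict slack $\mu > 0$ by choosing a partition on which $\gamma$ is not polygonal, and then exploit $n \geq 2$ to make the injectivity-producing perturbation small enough to stay within that slack.
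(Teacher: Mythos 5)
Your overall route is the paper's: replace $\gamma$ by a tree-reduced rectifiable path with the same endpoints and no more length, pass to the limit space, and use Theorem~\ref{tree_thm} together with Proposition~\ref{geodesic_prop} to produce geodesics in some $G^{\sigma(k)}(\R^n)$ whose first-layer projections converge uniformly. Your identification of the limit differs slightly from the paper's: the paper lifts the tree-reduced $\gamma$ via the unique path lifting property of $\pi^\infty_1$, observes the lift is injective and hence the (unique) geodesic between its endpoints, and then invokes Proposition~\ref{geodesic_prop}; you instead take the geodesic $\eta_\infty$ produced by the proposition and identify its projection with $\gamma'$ downstairs via Hambly--Lyons. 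Both work, and your explicit handling of the constant-speed reparametrization is a point the paper glosses over.

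The genuine problem is in your construction of $\gamma'$: the claim that for $n\geq 2$ a small generic perturbation of the interior vertices of an inscribed polygon (with endpoints pinned at $\gamma(0)$, $\gamma(1)$) yields an \emph{injective} polygon is false for $n=2$. A transversal self-crossing of a planar curve is stable under uniform perturbation (a degree argument applied to $(s,t)\mapsto \gamma(s)-\gamma(t)$ near the crossing parameters shows any uniformly close curve still self-intersects), and if $\gamma(0)=\gamma(1)$ injectivity is outright impossible with fixed endpoints. So for figure-eight-type paths in $\R^2$ your $\gamma'$ cannot be injective for small $\epsilon$. What you actually need, and all the later steps use, is that $\gamma'$ be \emph{tree-reduced}, which is strictly weaker than injective (a transversally self-crossing generic polygon is tree-reduced: any sub-loop between crossing times is not tree-like, e.g.\ it has nonzero signed area for generic vertices). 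So the fix is to replace ``injective'' by ``tree-reduced'' and argue genericity rules out tree-like sub-loops rather than self-intersections; with that change your Hambly--Lyons identification and the reparametrization $s_{\gamma'}$ still go through (one should also note the degenerate case where no partition produces slack $\mu>0$, namely $\gamma$ a monotone traversal of a straight segment, in which case $\gamma$ is already tree-reduced and no approximation is needed). The paper sidesteps all of this by simply asserting the tree-reduced approximation; your attempt to prove it is welcome but, as written, the planar injectivity step would fail.
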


\begin{proof}
Note first that $\gamma$ can be uniformly approximated by tree reduced rectifiable paths with length bounded from above by $L(\gamma)$ while fixing the endpoints. So without loss of generality we may assume that $\gamma$ already is a tree reduced rectifiable path. Let $(G^\infty(\R^n),1_\infty)$ be, as before,  the inverse limit of the sequence of pointed metric spaces $(G^k(\R^n),1_k)_{k\in\N}$. Since $\pi^\infty_1 : G^\infty(\R^n) \to \R^n$ is surjective by Lemma~\ref{firstlem}, there is a point $x_\infty \in G^\infty(\R^n)$ that projects to $\gamma(0) \in \R^n$. Due to Lemma~\ref{uniqueliftinfinity}, $\pi^\infty_1$ has the unique path lifting property and hence $\gamma$ lifts to a path $\gamma_\infty : [0,1] \to G^\infty(\R^n)$ starting at $x_\infty$. Since $\gamma$ is tree reduced, the curve $\gamma_\infty$ is injective and hence by Theorem~\ref{tree_thm} it is a geodesic (not necessarily parametrized proportional to arc length). Since $\gamma_\infty$ is the unique geodesic (up to reparametrizations) connecting its end points it now follows from Proposition~\ref{geodesic_prop} that there is an increasing sequence $\sigma : \N \to \N$ and geodesics $\gamma_{\sigma(k)}$ in $G^{\sigma(k)}(\R^n)$ for each $k \in \N$ such that $\pi^{\sigma(k)}_1 \circ \gamma_{\sigma(k)} : [0,1] \to G^{\sigma(k)}(\R^n) \to \R^n$ converges uniformly to $\pi^\infty_1 \circ \gamma_\infty = \gamma$.
\end{proof}

Instead of paths in $\R^n$ in the corollary above the analogous statement holds for paths in $G^k(\R^n)$ which can be approximated by projections of geodesics in $G^i(\R^n)$ for some $i > k$. The proof is essentially the same.

\begin{rem}
	\label{rem:limitcontpathlifing}
Although the projection $\pi^\infty_1 : X_\infty \to X_1$ has the unique path lifting property if each $\pi_i$ has it by Lemma~\ref{uniqueliftinfinity}, the map $\pi^\infty_1$ in general does not inherit the continuous path lifting property. This can be seen from the sequence of rectifiable paths $\gamma_n : [0,2] \to \R^2$ given by
\[
\gamma_n(t) \defl
\left\{
	\begin{array}{ll}
		(1-t,0)	  & \mbox{if } t \in [0,1] \\
		(t-1,\tfrac{1}{n}(t-1)) & \mbox{if } t \in [1,2]
	\end{array}
\right.
\]
and their lifts $\bar \gamma_n$ to $G^\infty(\R^2)$. Each $\bar \gamma_n$ is injective and because $(G^\infty(\R^2),d_\infty)$ is a metric tree by Theorem~\ref{tree_thm}, it holds that
\[
d_\infty(\bar\gamma_n(0),\bar\gamma_n(2)) = L(\bar \gamma_n) \geq L(\gamma_n) \geq 2 \ .
\]
But the limit path $\gamma : [0,2] \to \R^2$ has trivial signature, i.e., $\bar \gamma(0) = \bar \gamma(2)$ for its lift into the limit space $(G^\infty(\R^2),d_\infty)$. Thus $\pi^\infty_1 : G^\infty(\R^2) \to \R^2$ does not have the continuous path lifting property.
\end{rem}

\section{Trees as topological groups}

Let us recall the following definitions. An {\em arc} in a topological space $X$ is a set $I \subset X$ homeomorphic to $[0,1]$. By abuse of notation we denote by $\partial I$ the end points of $I$. The space $X$ is said to be {\em arcwise connected} if for any two different points $x,y \in X$ there is an arc connecting $x$ with $y$, i.e.,  there is an arc $I \subset X$ such that $\partial I = \{x,y\}$.

A \textbf{topological tree} is a metrizable topological space $X$ with the following two properties:
\begin{enumerate}
	\item $X$ is uniquely arcwise connected, i.e., for any two different points $x,y \in X$ there is a unique arc in $X$ connecting $x$ with $y$. We will denote this arc by $[x,y]$.
	\item $X$ is locally arcwise connected, i.e., for any $x \in X$ and any neighbourhood $U$ of $x$ there is an arcwise connected open neighbourhood $V$ of $x$ contained in $U$. 
\end{enumerate}

A \textbf{metric tree} is a uniquely arcwise connected geodesic metric space.

It is clear that any metric tree is a topological tree. The main result of \cite{MO} states that the converse holds too.

\begin{thm}[Theorem~5.1 in \cite{MO}]
\label{toptreeequiv_thm}
$X$ is a topological tree if and only if $X$ is homemorphic to a metric tree.
\end{thm}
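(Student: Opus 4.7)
The plan is to prove the nontrivial direction: every topological tree is homeomorphic to a metric tree; the converse is immediate, since geodesics in a metric tree realize the unique topological arcs and balls are arcwise connected. So given a topological tree $X$, I aim to construct a compatible metric $\rho$ for which the unique topological arc $[x,y]$ is a geodesic segment of length $\rho(x,y)$.

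First I would handle the compact case. A compact topological tree is a \emph{dendrite}, i.e., a Peano continuum containing no simple closed curve, and Bing's convex metrization theorem provides a compatible convex metric $\rho$ on $X$. In a complete convex metric space any two points admit a midpoint, and iteration produces a geodesic; by unique arcwise connectedness this geodesic must coincide with the unique topological arc, so $(X,\rho)$ is a metric tree.

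For the general case I would exhaust $X$ by an increasing sequence of compact subtrees $K_1 \subset K_2 \subset \cdots$, obtained, after reducing to the relevant separable arc-component, as the arc-hulls of finite subsets of a countable dense set. On each $K_n$ the compact case furnishes a convex metric $\rho_n$, and I would arrange inductively that $\rho_{n+1}$ restricts to $\rho_n$ on $K_n$, extending $\rho_n$ across the newly added branches of $K_{n+1}$ without disturbing distances already defined. Setting $\rho(x,y) \defl \rho_n(x,y)$ for any $n$ with $x,y \in K_n$ then yields a globally defined metric on $X$, and compatibility with the original topology and with the geodesic/arc structure can be verified locally on each $K_n$.

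The main obstacle is precisely this compatibility step: Bing's construction is far from canonical, so a convex metric on $K_{n+1}$ will typically not restrict to the metric already chosen on $K_n$, and extending a given convex tree metric across a new branch while preserving old distances requires genuine care. The Mayer--Oversteegen proof circumvents this by defining $\rho$ directly from the combinatorial structure of branch points and ends of $X$, assembling it as a locally finite sum; this seems the cleanest way to realize the inductive extension, at the cost of replacing the clean appeal to Bing's theorem with a more intricate hands-on construction.
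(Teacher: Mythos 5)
The paper offers no proof of this statement; it simply cites Mayer--Oversteegen, so your proposal has to stand on its own. The compact case you outline is fine: a compact topological tree is a dendrite, Bing--Moise convex metrization applies, and a convex metric on a uniquely arcwise connected continuum yields a metric tree. (In the converse direction, note that ``balls are arcwise connected'' is not purely formal: it uses that a uniquely arcwise connected geodesic space has convex balls, via the median point of a tripod; this is a known fact but deserves a line.)

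The genuine gap is in the passage to the general case: the exhaustion you propose does not exist for a general topological tree. First, such a space need not be separable (e.g.\ the hedgehog obtained by gluing uncountably many unit segments at a point is a metric tree, hence a topological tree, whose endpoint set is uncountable and uniformly separated), and since the space is arcwise connected there is no ``relevant separable arc-component'' to retreat to. Second, even separable topological trees need not be $\sigma$-compact: take the simplicial tree whose vertices are finite sequences of natural numbers with edge lengths $2^{-n}$ at level $n$ and pass to its completion; the space of ends is a closed subset homeomorphic to the Baire space $\N^{\N}$, which is not $\sigma$-compact, so no increasing sequence of compact subtrees can cover the space. (Already $X=[0,1]$ with the dense set $\mathbb{Q}\cap(0,1)$ shows that arc-hulls of finite subsets of a countable dense set miss endpoints.) Third, even when an exhaustion $K_1\subset K_2\subset\cdots$ with $\bigcup_n K_n=X$ happens to exist, ``compatibility verified locally on each $K_n$'' does not give that the limit metric induces the original topology: that argument only works at points which are interior to some $K_n$, i.e.\ it needs local compactness, and a topological tree need not be locally compact at any point of interest (the ends in the example above have no compact neighbourhoods); at such points the grafted metric can induce a strictly finer topology than the given one. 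So your scheme proves the theorem essentially only for locally compact, separable trees. You correctly sensed a difficulty, but located it in the non-canonicity of Bing's metric; the more fundamental obstruction is that the compact exhaustion itself is unavailable, which is precisely why Mayer--Oversteegen build the metric globally by a locally finite sum adapted to the branch structure rather than by extending metrics along an exhaustion.
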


The main goal of this section is to prove that nontrivial topological trees cannot have the structure of a topological group. This is implied by the following theorem. As a consequence we obtain that right translations are not continuous in the limit space $G^\infty(\R^n)$, or more generally, the space $G^\infty(\R^n)$ is not homeomorphic to a topological group.

\begin{thm}
\label{treenogroup_thm}
Assume that $X$ is a topological tree with a group structure $(\cdot,{}^{-1},e)$ such that all left translations $x \mapsto g\cdot x$ and right translations $x \mapsto x \cdot g$ are continuous. If $X$ contains more than one point, then $X$ is homemorphic to $\R$.
\end{thm}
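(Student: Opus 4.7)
The approach is a case analysis on the local branching of $X$. First I would note that for each $g\in X$ the left translation $L_g\colon x\mapsto g\cdot x$ is a homeomorphism with continuous inverse $L_{g^{-1}}$, and the same holds for right translations. Hence any topological invariant of the pointed space $(X,x)$ is shared by $(X,g\cdot x)$. In particular, if one defines the \emph{local valence} $v(x)$ as the supremum, over connected open neighborhoods $U$ of $x$, of the number of connected components of $U\setminus\{x\}$, then $v$ is constant on $X$; let $v\in\{1,2,3,\dots\}\cup\{\infty\}$ denote the common value.

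The cases $v=1$ and $v=2$ are straightforward. If $v=1$ every point of $X$ is an endpoint; but since $|X|>1$ the tree contains an arc of positive length whose interior points have valence at least $2$, a contradiction. If $v=2$ every point has a punctured neighborhood with exactly two components, so $X$ is locally homeomorphic to $\R$. Since $X$ is metrizable (hence paracompact), Hausdorff, connected, and has no endpoints, $X$ is a connected topological $1$-manifold without boundary; by the classification of $1$-manifolds, $X$ is homeomorphic to $\R$ or to $S^1$, and $S^1$ is excluded since it is not uniquely arcwise connected. Hence $X$ is homeomorphic to $\R$ in this case.

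The main obstacle is the remaining case $v\geq 3$, which must be excluded; this is where the simultaneous continuity of left and right translations is essential. My plan is to exploit the following rigidity: for any $g\in X$, by uniqueness of arcs in the tree, $L_{g}([e,g])=[g,g^{2}]=R_{g}([e,g])$, so parametrizing $[e,g]$ by $\gamma\colon[0,1]\to X$ yields an increasing self-homeomorphism $\phi$ of $[0,1]$ with $g\,\gamma(t)=\gamma(\phi(t))\,g$, meaning conjugation by $g$ stabilizes $[e,g]$ setwise. Picking three distinct branches $B_{1},B_{2},B_{3}$ at $e$ with representatives $b_{i}\in B_{i}$ close to $e$, the same comparison applied to $L_{g}([e,b_{i}])=[g,gb_{i}]$ and $R_{g}([e,b_{i}])=[g,b_{i}g]$, both starting at $g$, constrains the relative positions of $gb_{i}$ and $b_{i}g$ inside the tree at $g$. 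Letting $g$ run through small elements of $B_{3}$ and exploiting continuity of $L_g$ and $R_g$ as $g\to e$, the goal is to produce a continuously varying family of such constraints that can only be satisfied if two of the branches $B_{1},B_{2},B_{3}$ eventually collapse into one another at small enough scale, contradicting $v\geq 3$. Converting this outline into a precise contradiction is the technical core of the argument; once $v\geq 3$ is ruled out, the only remaining possibility is $v=2$, and the second paragraph concludes that $X$ is homeomorphic to $\R$.
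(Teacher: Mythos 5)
Your reduction to the branching case is fine in outline (homogeneity via left translations makes the local valence constant; $v=1$ is impossible, $v=2$ gives a $1$-manifold and hence $\R$ after excluding $S^1$), and it parallels the paper's first step, where ``not an interval'' is shown to force a tripod. But the proof has a genuine gap exactly at the point you yourself flag: the case $v\geq 3$ is never actually excluded. What you offer there is a heuristic plan --- the observation that $L_g([e,g])=[g,g^2]=R_g([e,g])$, so conjugation by $g$ stabilizes $[e,g]$, plus the hope that ``a continuously varying family of constraints'' forces two branches to collapse --- with no concrete mechanism for deriving a contradiction. Since the whole content of the theorem is that a branch point is incompatible with simultaneously continuous left and right translations (left translations alone are harmless: any metric tree is the Cayley graph picture of plenty of left-invariant group metrics, e.g.\ $G^\infty(\R^n)$ itself), omitting this step means the proof is missing its core.

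For comparison, the paper's argument at this point is short but specific: after translating, assume $e$ is the center of a tripod with legs $[e,x_1],[e,x_2],[e,x_3]$, and take $y_n\to e$ inside $[e,x_1]$. Left translation by $y_n$ carries the tripod to one centered at $y_n$, and unique arcwise connectedness shows that for each $n$ at least one leg $[y_n,y_n\cdot x_i]$ meets $[e,x_1]$ only in $\{y_n\}$; by pigeonhole one index $i$ works for infinitely many $n$, so $[x_i,e]\cup[e,y_n]\cup[y_n,y_n\cdot x_i]$ is an arc from $x_i$ to $y_n\cdot x_i$ passing through $e$. Now continuity of the \emph{right} translation by $x_i$ gives $y_n\cdot x_i\to x_i$, so for large $n$ both endpoints lie in an arcwise connected neighbourhood $V$ of $x_i$ with $e\notin V$, producing a second, different arc between the same endpoints and contradicting uniqueness. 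If you want to salvage your write-up, you need an argument of this concreteness (your conjugation identity alone does not obviously yield one); as it stands, the proposal proves the easy cases and leaves the essential one open.
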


\begin{proof}
We first show that in case $X$ is not an interval, then there can't be a group structure on $X$ with continuous left and right translations. This is proved by contradiction. Due to Theorem~~\ref{toptreeequiv_thm} we assume that $X$ is a metric tree.

\emph{Claim 1:} If $X$ is not an interval, it contains a tripod. This means that there are different points $x,x_1,x_2,x_3 \in X$ such that $[x,x_i] \cap [x, x_j] = \{x\}$ for $i \neq j$.

\emph{Proof of claim 1:} Assume that $X$ does not contain a tripod. Fix some $x_0 \in X$. Assume first that there is some $\epsilon > 0$ for which the set $A_\epsilon \defl \{x \in X : d(x_0,x) = \epsilon\}$ contains more than two points. Then any three points in $A_\epsilon$ form a tripod, contradicting the assumption. Now assume that $A_\epsilon$ contains only one point for any $\epsilon > 0$. Then $f : X \mapsto \R$ given by $f(x) \defl d(x_0,x)$ is an isometric embedding of $X$ onto some interval in $\R$. Finally assume that there are two different points $x_-,x_+ \in A_\epsilon$ for some $\epsilon > 0$. Let $X_{+}$ be the set of points $x \in X$ for which $[x_0,x_+] \cap [x_0,x] \neq \{x_0\}$. Because $X$ is uniquely arcwise connected this means that $[x_0,x_+] \cap [x_0,x] = [x_0,z]$ for some $z \neq x_0$. If now $x,y \in X_+$, then $[x_0,x]$ and $[x_0,y]$ meet in some initial segment and thus $[x_0,x]\cap[x_0,y] = [x_0,z]$ for some $z \neq x_0$ as before. But since $X$ does not contain a tripod, $x = z$ if $d(x_0,x) \leq d(x_0,y)$ and $y = z$ if $d(x_0,y) \leq d(x_0,x)$. Similarly we define $X_-$ with respect to $x_-$. Note that $X_+ \cap X_-$ are disjoint, otherwise $[x_0,x_+] \cup [x_0,x_-]$ would form a tripod. Thus $f : X \to \R$ defined by
\[
f(x) \defl 
\left\{
	\begin{array}{ll}
			0	  & \mbox{if } x = x_0 \\
		+d(x_0,x) & \mbox{if } x \in X_+ \\
		-d(x_0,x) & \mbox{if } x \in X_-
	\end{array}
\right.
\]
is an isometric embedding of $X$ onto an interval of $\R$. This proves the claim.

So there exists a tripod in $X$ as stated in the claim above. Left (or right) translating these arcs by $x^{-1}$ we may assume that $x = e$ is the center of a tripod. Pick some point $y \in [e,x_1] \setminus \{e,x_1\}$. Note that since left translation by $y$ is a homeomorphism, any set $y \cdot I$ is an arc if and only if $I$ is an arc. Thus, $y \cdot [e,x_i] = [y,y\cdot x_i]$.

\emph{Claim 2:} At most one of the arcs $[y,y\cdot x_1]$, $[y,y\cdot x_2]$ and $[y,y\cdot x_3]$ intersects the segment $[e,y]\setminus \{y\}$ and at most one of these arcs intersects $[y,x_1]\setminus \{y\}$.

This claim is quite clear since if two different $[y,y\cdot x_i]$ and $[y,y\cdot x_j]$ intersect say $[e,y]\setminus \{y\}$, then $[y,y\cdot x_i]\cap[y,y\cdot x_j] = [y,z]$ for some $z \neq y$. But this contradicts that $[y,y\cdot x_i] \cap [y,y\cdot x_j] = \{y\}$.

So there exists some $i \in \{1,2,3\}$ and a sequence $(y_n)_{n \in \N}$ in $[e,x_1] \setminus \{e,x_1\}$ with $\lim_{n\to\infty} y_n = e$ with $[y_n,y_n\cdot x_i] \cap [e,x_1] = \{y_n\}$. It follows that $[x_i,e] \cup [e,y_n] \cup [y_n,y_n\cdot x_i]$ is an arc. Set $U \defl X \setminus \{e\}$ Because $X$ is locally arcwise connected, there exists some arcwise connected open neighbourhood $V$ of $x_i$ contained in $U$. Because right translation by $x_i$ is continuous, $y_n\cdot x_i \in V$ for some large enough $n$. For such an $n$ there exists an arc entirely in $V$ connecting $x_i$ with $y_n\cdot x_i$. But this contradicts that $[x_i,e] \cup [e,y_n] \cup [y_n,y_n\cdot x_i]$ is another arc connecting $x_i$ with $y_n\cdot x_i$.

This shows that $X$ is homeomorphic to an interval. If $X$ has more than one point, then there is a point in $X$ with a neighbourhood homeomorphic to $\R$. Because left translations are homemorphisms of $X$ that act transitively we obtain that all points of $X$ have neighbourhoods homeomorphic to $\R$. As an interval $X$ has thus to be homeomorphic to $\R$.
\end{proof}

The conclusion in the theorem above can be strengthened by saying that $(X,\cdot)$ and $(\R,+)$ are isomorphic as topological groups. This boils down to the fact that the only topological group structure on $\R$ is the standard one.


\end{document}